\title{The tilted Carath{\'e}odory class and its applications }
\author[L.-M. Wang]{Li-Mei Wang}
\address{Division of Mathematics, Graduate School of Information Sciences, Tohoku University, 6-3-09 Aramaki-Aza-Aoba, Aoba-ku, Sendai, Miyagi 980-8579, Japan, phone : +81-22-795-4636}
\email{rime@ims.is.tohoku.ac.jp}
\date{}
\subjclass[2000]{Primary 30C45, Secondary 30C70 }
\keywords{the tilted Carath{\'e}odory class, $\lambda$-spirallike functions, close-to-convex functions with argument $\lambda$, Robertson functions, convolution, subordination.}
\newtheorem{thm}{Theorem}
\newtheorem{cor}{Corollary}
\newtheorem{lem}{Lemma}
\newtheorem{pro}{Proposition}
\theoremstyle{definition}
\newtheorem{definition}{Definition}
\newtheorem{remark}{Remark}
\renewcommand{\Re}{{\operatorname{Re}\,}}
\renewcommand{\Im}{{\operatorname{Im}\,}}
\renewcommand{\arccos}{{\operatorname{arccos}\,}}
\newcommand{\Ext}{{\operatorname{Ext}\,}}
\begin{document}

\maketitle

\begin{abstract}
This survey mainly deals with the tilted Carath{\'e}odory class by angle $\lambda$ (denoted by
$\mathcal{P}_{\lambda}$) an element of which maps the unit disc into the tilted right half-plane $\{w: \Re e^{i\lambda} w>0\}$. Firstly we will characterize $\mathcal{P}_{\lambda}$ from different aspects. In section 3, various estimates of functionals over $\mathcal{P}_{\lambda}$ are deduced from the known corresponding estimates of $\mathcal{P}_{0}$ or elementary functional analysis. Finally some subsets of analytic functions related to $\mathcal{P}_{\lambda}$ including close-to-convex functions with argument $\lambda$, $\lambda$-spirallike functions, $\lambda$-Robertson functions and analytic functions whose derivative is in $\mathcal{P}_{\lambda}$ are also considered as applications.  
\end{abstract}

\section{Introduction}
\par Let $\mathcal{A}$ be the family of functions $f$ analytic in the unit disc $\mathbb{D}=\{z\in \mathbb{C}:\, |z|<1\}$, and $\mathcal{A}_{1}$ be the subset of $\mathcal{A}$ consisting of functions $f$ which are normalized by $f(0)=f'(0)-1=0$ while $\mathcal{A}_{0}$ with normalization $f(0)=1$. A function $f\in\mathcal{A}$ is said to be subordinate to a function $F\in\mathcal{A}$ (in symbols $f\prec F$ or $f(z)\prec F(z)$) in $\mathbb{D}$ if there exists an analytic function $\omega(z)$ with $|\omega(z)|<1$ and $\omega(0)=0$, such that 
\[
f(z)=F(\omega(z))
\]
in $\mathbb{D}$. When $F$ is a univalent function, the condition $f\prec F$ is equivalent to $f(\mathbb{D})\subseteq F(\mathbb{D})$ and $f(0)=F(0)$.
\par \par Let 
\[
\mathcal{P}_{\lambda}=\left\{p\in\mathcal{A}_{0} :\,  \Re e^{i \lambda}p(z)>0 \right\}.
\]
Here and hereafter we always suppose $-\pi/2<\lambda<\pi/2$. Note that $\mathcal{P}_{\lambda}$ is a convex and compact subset of $\mathcal{A}$ which is equipped with the topology of uniform convergence on compact subsets of $\mathbb{D}$. 
Since $\mathcal{P}_{0}$ is the well-known Carath{\'e}odory class, we call $\mathcal{P}_{\lambda}$ \textit{the tilted Carath{\'e}odory class by angle $\lambda$}. Also we let
\[
\mathcal{P}=\bigcup_{-\pi/2<\lambda<\pi/2} \mathcal{P}_{\lambda}.
\]
\par In this note, we always let
\[
p_{\lambda}(z)=\frac{1+e^{-2i\lambda}z}{1-z}.
\]
It is easy to see that $p_{\lambda}$ univalently maps the unit disk $\mathbb{D}$ onto $\mathbb{H}_{\lambda}=\{w\in\mathbb{C}:\, \Re e^{i\lambda}w>0\}$ which is called \textit{the tilted right half-plane by angle $\lambda$}. Later we will see that this function plays an important role while investigating the properties of $\mathcal{P}_{\lambda}$.
\par The Carath{\'e}odory class $\mathcal{P}_{0}$ occupies an extremely important place in the theory of functions and has been studied by many authors ([5, Chapter 7, p. 77], \cite{GrahamKohr}, \cite{Kirwan}, \cite{Libera}, \cite{Robertson1}, \cite{Robertson2}, \cite{Ruscheweyh}, \cite{Sakaguchi}, \cite{Zmorovic1}, \cite{Zmorovic2}). The tilted Carath{\'e}odory class $\mathcal{P}_{\lambda}$ scatters in some papers (see \cite{KimSugawa2006}, \cite{KimSugawa}, \cite{Libera}, \cite{Robertson3}), although the name was not given in the literature.
\par Section 2 is devoted to characterizations of the functions belonging to the class $\mathcal{P}_{\lambda}$ from different aspects. A linear relation between the elements of $\mathcal{P}_{\lambda}$ and $\mathcal{P}_{0}$ implies that the functions in $\mathcal{P}_{\lambda}$ can be described in terms of integral and  subordination. We show also that $\mathcal{P}_{\lambda}$ can be regarded as the dual and the second dual sets of some analytic function famlilies. 
\par In section 3, the extremal functions of $\mathcal{P}_{\lambda}$ are deduced directly from those of the Carath{\'e}odory class $\mathcal{P}_{0}$. With the aid of these extremal functions, the sharp estimates of some functionals over $\mathcal{P}_{\lambda}$, for instance, the $n$-th coefficient functional, the distortion and growth functionals, have been obtained. For the cases of other functionals, we summarize some other methods to deal with the extremal problems. By using fundamental functional analysis, we obtain the sharp estimate of $\Re zp'(z)/p(z)$ with $p(z)\in\mathcal{P}_{\lambda}$. The estimate of this entity was considered by some authors, see Bernardi \cite{Bernardi} and Robertson \cite{Robertson2} , and the sharp one appeared in a paper of Ruscheweyh and Singh \cite{Ruscheweyh}. Although our estimate is equivalent to that in \cite{Ruscheweyh}, the extremal functions which make the estimate sharp are also given explicitly. 
\par The last section is concerned with some applications of our results to Geometric Function Theory. We will consider $\lambda$-spirallike functions, close-to-convex functions with argument $\lambda$, $\lambda$-Robertson functions and analytic functions whose derivative is in $\mathcal{P}_{\lambda}$.

\section{Characterisations of $\mathcal{P}_{\lambda}$}
In this section, we list some characterizations of $\mathcal{P}_{\lambda}$ for later use. Before proceeding to it, we shall first introduce some notations. The convolution (or Hadamard product) $f*g$ of two functions $f,\, g\in \mathcal{A}$ with series expansions $f(z)=\sum_{n=0}^{\infty}a_{n}z^n$ and $g(z)=\sum_{n=0}^{\infty}b_{n}z^n$ is defined by
\[
f*g(z)=\sum_{n=0}^{\infty}a_{n}b_{n}z^n.
\]
Obviously, we have $f*g\in \mathcal{A}$. For an introduction to the theory of convolutions in the present context we refer to \cite{Ru}. For an analytic function $h\in\mathcal{A}_{0}$, we note that
\begin{equation}\label{h*p}
\begin{split}
h(z)*\frac{1+Az}{1-z}
&=h(z)*\left(\frac{1+A}{1-z}-A \right)\\
&=(1+A)h(z)-A
\end{split}
\end{equation}
for a complex number $A$ which will be used several times. For a set $V\subset \mathcal{A}_{0}$, define the dual set
\[
V^{*}=\{g\in\mathcal{A}_{0}:\, (f*g)(z)\not=0\, \text{in}\, \mathbb{D}\, \text{for any}\, f\in V\},
\]
and $V^{**}=(V^{*})^{*}$, the second dual.
\par Here we recall that from the proof of Theorem 1.3 in \cite{Ru}, a function $h\in\mathcal{A}_{0}$ belongs to $\mathcal{P}$ if and only if 
\[
th(xz)+(1-t)h(yz)\not=0,
\]
for any $|x|=|y|=1$, $0\leq t\leq 1$ and $z\in\mathbb{D}$.
 
\begin{thm}\label{theorem 1}
Let $\lambda\in(-\pi/2,\pi/2)$ be a real constant, the following conditions are equivalent for a function $p\in\mathcal{A}_{0}$ 

\begin{enumerate}[(\@i)]
\makeatletter
\item $p\in \mathcal{P}_{\lambda}$; \\
\item  $\frac{e^{i\lambda}p-i\sin \lambda}{\cos \lambda }\in \mathcal{P}_{0}$;\\
\item  There exists a Borel probability measure $\mu$ on $\partial\mathbb{D}$ such that $p(z)$ can be represented by
\[
p(z)=\int_{\partial\mathbb{D}} \frac{1+e^{-2i\lambda}xz}{1-xz}d\mu(x);
\]
\item  $p\prec p_{\lambda}$ in $\mathbb{D}$;
\item  $p\in V_{\lambda}^{*}$, where
\[
V_{\lambda}=\left\{ \frac{1+\frac{1-e^{-2i\lambda}x}{(1+e^{-2i\lambda})x}z}{1-z}:\, |x|=1 \right\};
\]
\item $p\in W_{\lambda}^{**}$, where
\[
W_{\lambda}=\left\{t \frac{1+x e^{-2i\lambda}z}{1-xz}+(1-t) \frac{1+y e^{-2i\lambda}z}{1-yz}:\, |x|=|y|=1, \,0\leq t\leq 1\right\}.
\]
\end{enumerate}
\end{thm}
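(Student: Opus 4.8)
The plan is to treat condition (i) as a hub and to prove (i)$\Leftrightarrow$(ii), (ii)$\Leftrightarrow$(iii), (i)$\Leftrightarrow$(iv), (i)$\Leftrightarrow$(v) and (i)$\Leftrightarrow$(vi) separately. The first three are quick. For (i)$\Leftrightarrow$(ii), set $q=(e^{i\lambda}p-i\sin\lambda)/\cos\lambda$; since $\cos\lambda>0$ one has $q(0)=1$ and $\Re q=\Re(e^{i\lambda}p)/\cos\lambda$, so $q\in\mathcal{P}_0$ exactly when $p\in\mathcal{P}_\lambda$, the inverse relation being $p=e^{-i\lambda}(q\cos\lambda+i\sin\lambda)$. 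For (ii)$\Leftrightarrow$(iii) I would substitute the Herglotz representation $q(z)=\int_{\partial\mathbb{D}}\frac{1+xz}{1-xz}\,d\mu(x)$ of $\mathcal{P}_0$ into this inverse relation and use the identity $e^{-i\lambda}\bigl(\cos\lambda\,\frac{1+xz}{1-xz}+i\sin\lambda\bigr)=\frac{1+e^{-2i\lambda}xz}{1-xz}$ together with $\int_{\partial\mathbb{D}}d\mu=1$. For (i)$\Leftrightarrow$(iv): $p_\lambda$ is univalent with $p_\lambda(\mathbb{D})=\mathbb{H}_\lambda$ and $p_\lambda(0)=1=p(0)$, so $p\prec p_\lambda$ is equivalent to $p(\mathbb{D})\subseteq\mathbb{H}_\lambda$, which is precisely (i).

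For (i)$\Leftrightarrow$(v) I would use \eqref{h*p}. Writing a generic member of $V_\lambda$ as $g_x(z)=\frac{1+A_xz}{1-z}$ with $A_x=\frac{1-e^{-2i\lambda}x}{(1+e^{-2i\lambda})x}$, identity \eqref{h*p} gives $(p*g_x)(z)=(1+A_x)p(z)-A_x$, and the short computations $1+A_x=\frac{1+x}{(1+e^{-2i\lambda})x}$ and $\frac{A_x}{1+A_x}=\frac{1-e^{-2i\lambda}x}{1+x}$ show that, for $x\neq-1$, $(p*g_x)(z)\neq0$ in $\mathbb{D}$ is the same as $p(z)\neq\frac{1-e^{-2i\lambda}x}{1+x}$ in $\mathbb{D}$ (the value $x=-1$ yields $g_{-1}\equiv1$ and hence the harmless condition $p(0)\neq0$). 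It then remains to observe that as $x$ runs over $\partial\mathbb{D}\setminus\{-1\}$ the points $\frac{1-e^{-2i\lambda}x}{1+x}$ sweep out exactly the line $\partial\mathbb{H}_\lambda$: writing $x=e^{i\theta}$ one finds $e^{i\lambda}\frac{1-e^{-2i\lambda}x}{1+x}=i\,\frac{\sin(\lambda-\theta/2)}{\cos(\theta/2)}$, which is purely imaginary and, since $|\lambda|<\pi/2$, takes every value of $i\mathbb{R}$ as $\theta$ ranges over $(-\pi,\pi)$. Since $p(0)=1\in\mathbb{H}_\lambda$ and $p(\mathbb{D})$ is connected, $p$ avoids $\partial\mathbb{H}_\lambda$ if and only if $p(\mathbb{D})\subseteq\mathbb{H}_\lambda$, which gives (i)$\Leftrightarrow$(v).

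The main work is (i)$\Leftrightarrow$(vi), and I would reduce it to the claim $W_\lambda^{*}=\mathcal{P}_\lambda^{*}$: once this is known, then since (v) exhibits $\mathcal{P}_\lambda=V_\lambda^{*}$ as a dual set (so $\mathcal{P}_\lambda^{**}=\mathcal{P}_\lambda$) we get $W_\lambda^{**}=\mathcal{P}_\lambda^{**}=\mathcal{P}_\lambda$, i.e. (vi)$\Leftrightarrow$(i). By \eqref{h*p}, for $g\in\mathcal{A}_0$ the convolution of $g$ with the function $z\mapsto\frac{1+xe^{-2i\lambda}z}{1-xz}$, evaluated at $z$, equals $h(xz)$, where $h:=p_\lambda*g=(1+e^{-2i\lambda})g-e^{-2i\lambda}\in\mathcal{A}_0$; hence $g\in W_\lambda^{*}$ means precisely that $t\,h(xz)+(1-t)h(yz)\neq0$ for all $|x|=|y|=1$, $t\in[0,1]$, $z\in\mathbb{D}$, which by the characterization of $\mathcal{P}$ recalled from \cite{Ru} is the same as $h=p_\lambda*g\in\mathcal{P}$. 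Now if $g\in W_\lambda^{*}$, pick $\nu\in(-\pi/2,\pi/2)$ with $h\in\mathcal{P}_\nu$; for $f\in\mathcal{P}_\lambda$ use the representation (iii) to obtain $(f*g)(z)=\int_{\partial\mathbb{D}}h(xz)\,d\mu(x)$, whence $\Re\bigl(e^{i\nu}(f*g)(z)\bigr)=\int_{\partial\mathbb{D}}\Re\bigl(e^{i\nu}h(xz)\bigr)\,d\mu(x)>0$, so $(f*g)(z)\neq0$ and $g\in\mathcal{P}_\lambda^{*}$; thus $W_\lambda^{*}\subseteq\mathcal{P}_\lambda^{*}$. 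The reverse inclusion $\mathcal{P}_\lambda^{*}\subseteq W_\lambda^{*}$ is immediate since $W_\lambda\subseteq\mathcal{P}_\lambda$ (each element of $W_\lambda$ is a convex combination of the functions $p_\lambda(xz),p_\lambda(yz)\in\mathcal{P}_\lambda$ and $\mathcal{P}_\lambda$ is convex). The points I expect to need the most care are the interchange of convolution and integration in $(f*g)(z)=\int_{\partial\mathbb{D}}h(xz)\,d\mu(x)$, which follows from continuity of the operator $f\mapsto f*g$ together with local uniform convergence of the Herglotz kernel, and the strictness of the last inequality, which holds because $x\mapsto\Re\bigl(e^{i\nu}h(xz)\bigr)$ is continuous and strictly positive on the compact set $\partial\mathbb{D}$ while $\mu$ is a probability measure.
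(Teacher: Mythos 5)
Your proposal is correct and follows essentially the same route as the paper: (i), (ii), (iv) directly from the definition, (iii) via the Herglotz representation transported by the linear relation, (v) via the convolution identity \eqref{h*p} reducing membership in $V_\lambda^{*}$ to $p$ avoiding the boundary line of $\mathbb{H}_\lambda$, and (vi) by establishing $W_\lambda^{*}=\mathcal{P}_\lambda^{*}$ through the criterion from \cite{Ru} together with the integral representation (iii). In fact your write-up is slightly more careful than the paper's at two points: you justify explicitly that $W_\lambda^{**}=\mathcal{P}_\lambda^{**}=\mathcal{P}_\lambda$ (using that $\mathcal{P}_\lambda=V_\lambda^{*}$ is itself a dual set), and your excluded value $(1-e^{-2i\lambda}x)/(1+x)$ is the correct one, the paper's displayed $(1-e^{-2i\lambda}x)/(1-x)$ being a typo.
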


\begin{remark}
Theorem $\ref{theorem 1}$ implies that 
\[
\mathcal{P}_{\lambda}=V_{\lambda}^{*}
\]
and 
\[
\mathcal{P}_{\lambda}=W_{\lambda}^{**}.
\]
For $\mathcal{P}$, we have (see [28, Theroem 1.6])
\[
\mathcal{P}=\left\{\frac{1+xz}{1+yz}:\, |x|=|y|=1\right\}^{*}
\]
and
\[
\mathcal{P}=\left\{f\in\mathcal{A}_{0}:\, \Re f(z)>1/2 \right\}^{**}.
\]
\end{remark}

\noindent \textit{Proof of Theorem $\ref{theorem 1}$.} 
The equivalence of (i), (ii) and (iv) can be obtained immediately from the definition of $\mathcal{P}_{\lambda}$. The condition (iii) is reduced to the Herglotz integral representation of $\mathcal{P}_{0}$ (\cite{He}, see also \cite{DH}) when $\lambda=0$. Thus by the equivalence of (i) and (ii), we can easily get (i) $\Leftrightarrow$ (iii). Hence we only need to prove the equivalence of (i), (v) and (vi).
\par Firstly we will show (i) $\Leftrightarrow$ (v). It is sufficient to prove that
\begin{equation}\label{*}
p(z)* \frac{1+\frac{1-e^{-2i\lambda}x}{(1+e^{-2i\lambda})x}z}{1-z}\not=0
\end{equation}
for $|x|=1$ and $z\in \mathbb{D}$ if and only if $p\in\mathcal{P}_{\lambda}$.

By making use of equation $(\ref{h*p})$, we have ($\ref{*}$) is equivalent to 
\[
p(z)*\left(\frac{1}{1-z}-\frac{1-e^{-2i\lambda}x}{1+x}\right)\not=0, 
\]
namely,
\[
p(z)\not=\frac{1-e^{-2i\lambda}x}{1-x}
\]
for $|x|=1$ and $z\in \mathbb{D}$. Since the set $\left\{(1-e^{-2i\lambda}x)/(1-x):\, |x|=1 \right\}$ is equal to the line $\{w:\, \Re e^{i\lambda}w =0\}$, the above inequality implies that the image $p(\mathbb{D})$ lies in the tilted right half-plane by angle $\lambda$ by observing that $p(0)=1$ which is our assertion. 

\par Next we will show (i)$\Leftrightarrow$(vi). To this end, we need to prove that an analytic function $h\in\mathcal{A}_{0}$ satisfies 
\begin{equation}\label{h(z)}
h(z)*\left(t \frac{1+x e^{-2i\lambda}z}{1-xz}+(1-t) \frac{1+y e^{-2i\lambda}z}{1-yz}\right)\not=0
\end{equation}
for any $|x|=|y|=1$, $0\leq t\leq 1 $ and $z\in\mathbb{D}$, if and  only if 
\[
h*p(z)\not=0
\]
for $z\in\mathbb{D}$ and $p\in\mathcal{P}_{\lambda}$.
\par If an analytic function $h$ with $h(0)=1$ satisfies $(\ref{h(z)})$ for any $|x|=|y|=1$, $0\leq t\leq 1 $ and $z\in\mathbb{D}$, then by equation $(\ref{h*p})$, we have 
\[
th(xz)+(1-t)h(yz)\not=\frac{e^{-2i\lambda}}{1+e^{-2i\lambda}}
\]
for any $|x|=|y|=1$, $0\leq t\leq 1 $ and $z\in\mathbb{D}$. Thus by making use of the statements just before Theorem $\ref{theorem 1}$, we have there exists a real constant $\alpha\in[0,2\pi)$ such that
\[
\Re e^{i\alpha}\left(h(z)-\frac{e^{-2i\lambda}}{1+e^{-2i\lambda}}\right)>0,
\]
which implies that for any Borel probability measure $\mu$ on $ \partial\mathbb{D}$ we have
\[
\int_{\partial\mathbb{D}}\left(h(xz)-\frac{e^{-2i\lambda}}{1+e^{-2i\lambda}}\right)d\mu(x)\not=0.
\]
Hence for any Borel probability measure $\mu$ on $\partial\mathbb{D}$ we have 
\[
\int_{\partial\mathbb{D}}h(xz)d\mu(x)\not=\frac{e^{-2i\lambda}}{1+e^{-2i\lambda}},
\]
which is equivalent to
\[
h(z)*\int_{\partial\mathbb{D}}\frac{1+e^{-2i\lambda}xz}{1-xz}d\mu(x)\not=0.
\]
Then for any function $p\in \mathcal{P}_{\lambda}$ we have 
\[
p*h(z)\not=0
\]
for $z\in\mathbb{D}$ by the equivalence of (i) and (iii). Since the above procedure is invertible, we thus arrive at our conclusions.
\qed

\par By making use of the following Schur's lemma , we can get a convolution property of $\mathcal{P}_{\lambda}$.

\begin{lem}[see \cite{Schur} or \cite{Kortram}]\label{lemma 1}
Let $p_{1}(z)=1+\sum_{n=1}^{\infty}a_{n}z^n \in \mathcal{P}_{0}$ and $p_{2}(z)=1+\sum_{n=1}^{\infty}b_{n}z^n \in \mathcal{P}_{0}$, then 
\[
1+\sum_{n=1}^{\infty}\frac{a_{n}b_{n}}{2}z^n \in \mathcal{P}_{0}.
\]

\end{lem}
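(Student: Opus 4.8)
The plan is to reduce the statement to the Herglotz representation of the Carath\'eodory class, which gives the coefficients of a function in $\mathcal{P}_0$ as moments of a probability measure on $\partial\mathbb{D}$. Concretely, writing $p_j(z)=1+\sum_{n\ge 1}c_n^{(j)}z^n$ for $j=1,2$, the representation $p_j(z)=\int_{\partial\mathbb{D}}\frac{1+xz}{1-xz}\,d\mu_j(x)$ yields $c_n^{(j)}=2\int_{\partial\mathbb{D}}\bar x^{\,n}\,d\mu_j(x)$ (up to the choice of convention for the kernel expansion). Hence
\[
1+\sum_{n\ge 1}\frac{a_n b_n}{2}\,z^n
=1+\sum_{n\ge 1}\left(2\int_{\partial\mathbb{D}}\bar x^{\,n}\,d\mu_1(x)\right)\left(\int_{\partial\mathbb{D}}\bar y^{\,n}\,d\mu_2(y)\right)z^n,
\]
and the first step is to interchange sum and integrals (justified by absolute convergence for $|z|<1$, since $|c_n^{(j)}|\le 2$) to rewrite the right-hand side as a double integral over $\partial\mathbb{D}\times\partial\mathbb{D}$ of $\frac{1+xyz}{1-xyz}$ against the product measure $d\mu_1(x)\,d\mu_2(y)$.

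The second step is to recognize that this is an average of kernels $\frac{1+wz}{1-wz}$ with $|w|=1$: pushing the product measure forward under the map $(x,y)\mapsto xy$ from $\partial\mathbb{D}\times\partial\mathbb{D}$ to $\partial\mathbb{D}$ produces a Borel probability measure $\nu$ on $\partial\mathbb{D}$, and then
\[
1+\sum_{n\ge 1}\frac{a_n b_n}{2}\,z^n
=\int_{\partial\mathbb{D}}\frac{1+wz}{1-wz}\,d\nu(w).
\]
By the Herglotz representation (equivalently, the case $\lambda=0$ of the equivalence (i)$\Leftrightarrow$(iii) in Theorem \ref{theorem 1}), the right-hand side lies in $\mathcal{P}_0$, which is exactly the claim. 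An alternative, measure-free route would be to observe that for each fixed $|y|=1$ the function $z\mapsto p_1(yz)$ is again in $\mathcal{P}_0$ (rotation invariance), that the Hadamard product $\tfrac12\big(p_1\ast q\big)$ with $q(z)=\frac{1+yz}{1-yz}$ picks out the desired coefficients via \eqref{h*p}, and then integrate in $y$ against $\mu_2$ using convexity and compactness of $\mathcal{P}_0$; both routes are essentially the same computation dressed differently.

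The only genuine point requiring care is the convention for the Herglotz kernel and the resulting factor of $2$: one must make sure that the coefficient formula $c_n = 2\int \bar x^{\,n} d\mu$ (or $c_n = 2\int x^{\,n} d\mu$, depending on how one writes $p_\lambda$) is used consistently, so that the product of two such coefficients, divided by $2$, is again of the form $2\int (\cdot)^{\,n} d\nu$ rather than $4\int(\cdot)^n d\nu$ — this is precisely why the factor $\tfrac12$ appears in the statement and cannot be dropped. Beyond bookkeeping, there is no real obstacle: the interchange of summation and integration is routine, and the pushforward of a probability measure is automatically a probability measure, so membership in $\mathcal{P}_0$ follows immediately from the representation theorem already invoked in the proof of Theorem \ref{theorem 1}.
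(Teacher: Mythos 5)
Your argument is correct and complete: with the kernel $\tfrac{1+xz}{1-xz}=1+2\sum_{n\ge1}(xz)^n$ the Herglotz representation gives $a_n=2\int x^n d\mu_1$, $b_n=2\int y^n d\mu_2$, so $\tfrac{a_nb_n}{2}=2\iint (xy)^n d\mu_1 d\mu_2$, and the pushforward of $\mu_1\times\mu_2$ under $(x,y)\mapsto xy$ exhibits $1+\sum_n\tfrac{a_nb_n}{2}z^n$ as a Herglotz integral, hence in $\mathcal{P}_0$ (the conjugation convention you flag is immaterial, and Fubini is justified as you say). Note that the paper itself offers no proof of this lemma, citing Schur and Kortram instead; your product-measure argument is precisely the classical proof found in those sources, so there is nothing to reconcile.
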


\begin{thm}\label{theorem 2}
 Let $p_{1}\in \mathcal{P}_{\lambda_{1}}$ and $p_{2}\in \mathcal{P}_{\lambda_{2}}$, then
\begin{equation}\label{*in2}
\Re e^{i(\lambda_{1}+\lambda_{2})}(p_{1}*p_{2})>-\cos(\lambda_{1}-\lambda_{2}).
\end{equation}
In particular, if $\cos(\lambda_{1}-\lambda_{2})<0$, then
\[
p_{1}*p_{2}\in \mathcal{P}_{\lambda_{1}+\lambda_{2}}.
\]
\end{thm}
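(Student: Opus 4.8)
The plan is to transport the problem to the classical Carath\'eodory class by the linear relation of Theorem~\ref{theorem 1}(ii), and then feed the result into Schur's Lemma~\ref{lemma 1}. Put $q_j=(e^{i\lambda_j}p_j-i\sin\lambda_j)/\cos\lambda_j$ for $j=1,2$; by Theorem~\ref{theorem 1}(ii) we have $q_1,q_2\in\mathcal{P}_0$, and solving back,
\[
p_j(z)=e^{-i\lambda_j}\cos\lambda_j\, q_j(z)+i e^{-i\lambda_j}\sin\lambda_j,\qquad j=1,2.
\]
Since the Hadamard product is bilinear and convolving any $g\in\mathcal{A}_0$ with a constant function $c$ returns the constant function $c\,g(0)=c$, expanding $p_1*p_2$ from these two decompositions leaves only the term $e^{-i(\lambda_1+\lambda_2)}\cos\lambda_1\cos\lambda_2\,(q_1*q_2)$ plus a constant $C$ that collects the three remaining (two ``mixed'' and one ``constant times constant'') contributions. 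A short computation gives $e^{i(\lambda_1+\lambda_2)}C=i\sin(\lambda_1+\lambda_2)-\sin\lambda_1\sin\lambda_2$, so $\Re\bigl(e^{i(\lambda_1+\lambda_2)}C\bigr)=-\sin\lambda_1\sin\lambda_2$, while $e^{i(\lambda_1+\lambda_2)}e^{-i(\lambda_1+\lambda_2)}\cos\lambda_1\cos\lambda_2=\cos\lambda_1\cos\lambda_2$.

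The second ingredient is the pointwise bound $\Re(q_1*q_2)(z)>-1$ on $\mathbb{D}$. Writing $q_j(z)=1+\sum_{n\geq1}c_n^{(j)}z^n$, Lemma~\ref{lemma 1} applied to $q_1$ and $q_2$ yields $1+\sum_{n\geq1}\tfrac12 c_n^{(1)}c_n^{(2)}z^n\in\mathcal{P}_0$; but this function is precisely $\tfrac12\bigl(1+(q_1*q_2)(z)\bigr)$, whence $\Re\bigl(1+(q_1*q_2)(z)\bigr)>0$, which is the asserted bound.

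Assembling the pieces: multiply the expansion of $p_1*p_2$ by $e^{i(\lambda_1+\lambda_2)}$ and take real parts. Because $|\lambda_1|,|\lambda_2|<\pi/2$ we have $\cos\lambda_1\cos\lambda_2>0$, so the main term contributes $\cos\lambda_1\cos\lambda_2\,\Re(q_1*q_2)(z)>-\cos\lambda_1\cos\lambda_2$, and adding the constant contribution $-\sin\lambda_1\sin\lambda_2$ gives
\[
\Re e^{i(\lambda_1+\lambda_2)}(p_1*p_2)(z)>-(\cos\lambda_1\cos\lambda_2+\sin\lambda_1\sin\lambda_2)=-\cos(\lambda_1-\lambda_2),
\]
which is (\ref{*in2}). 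For the final assertion, note that $\cos(\lambda_1-\lambda_2)<0$ forces $\lambda_1$ and $\lambda_2$ to be nonzero of opposite signs, whence $|\lambda_1+\lambda_2|<\pi/2$ and $\mathcal{P}_{\lambda_1+\lambda_2}$ is well defined; then (\ref{*in2}) reads $\Re e^{i(\lambda_1+\lambda_2)}(p_1*p_2)>0$, i.e. $p_1*p_2\in\mathcal{P}_{\lambda_1+\lambda_2}$.

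The trigonometric bookkeeping above is routine; the only points that deserve care are the precise bilinear expansion against the constant terms (so that the constant $C$ is identified correctly) and the fact that Schur's Lemma upgrades a statement about Taylor coefficients to the genuine pointwise inequality $\Re(q_1*q_2)>-1$ throughout $\mathbb{D}$ — this is exactly what forces (\ref{*in2}) to hold with a strict inequality.
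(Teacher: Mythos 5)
Your proof is correct and follows essentially the same route as the paper: transport $p_1,p_2$ to $\mathcal{P}_0$ via Theorem~\ref{theorem 1}(ii) and apply Schur's Lemma~\ref{lemma 1} to conclude $\tfrac12(q_1*q_2)+\tfrac12\in\mathcal{P}_0$, i.e.\ $\Re(q_1*q_2)>-1$. The only difference is that you spell out the bilinear expansion of $p_1*p_2$ and the trigonometric identification of the constant term, which the paper compresses into the single assertion that the Schur conclusion ``is equivalent to (\ref{*in2})''.
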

\begin{proof}
Since $p_{1}(z)\in \mathcal{P}_{\lambda_{1}}$ and $p_{2}(z)\in \mathcal{P}_{\lambda_{2}}$, then by the equivalence (i) and (ii) in Theorem $\ref{theorem 1}$ we have
\[
\frac{e^{i\lambda_{1}}p_{1}(z)-i\sin\lambda_{1}}{\cos\lambda_{1}}\in \mathcal{P}_{0}
\]
and
\[
\frac{e^{i\lambda_{2}}p_{2}(z)-i\sin\lambda_{2}}{\cos\lambda_{2}}\in \mathcal{P}_{0}.
\]
Hence Schur's lemma implies that 
\[
\frac 1 2 \left(\frac{e^{i\lambda_{1}}p_{1}(z)-i\sin\lambda_{1}}{\cos\lambda_{1}}*\frac{e^{i\lambda_{2}}p_{2}(z)-i\sin\lambda_{2}}{\cos\lambda_{2}}\right)+\frac 1 2 \in \mathcal{P}_{0} 
\]
which is equivalent to (\ref{*in2}). 
\end{proof}


\section{Basic estimates of $\mathcal{P}_{\lambda}$}

\par The extremal points of $\mathcal{P}_{0}$ can be obtained by the Herglotz integral representation formula. A truly beautiful derivation of $\Ext \mathcal{P}_{0}$ was given by Holland \cite{Holland}, while Kortram \cite{Kortram} obtained it by elementary functional analysis. We state it as a lemma in order to get the corresponding result of $\mathcal{P}_{\lambda}$.

\begin{lem}\label{lemma 2}
\[
\Ext \mathcal{P}_{0}=\left\{\frac{1+xz}{1-xz},\, |x|=1\right\}.
\]
\end{lem}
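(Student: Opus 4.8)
The plan is to deduce $\Ext \mathcal{P}_0$ from the Herglotz representation together with a convexity/extreme-point argument, exactly in the spirit of Holland's derivation. First I would recall that by the Herglotz integral representation formula (which is condition (iii) of Theorem~\ref{theorem 1} specialized to $\lambda=0$), every $p\in\mathcal{P}_0$ can be written as
\[
p(z)=\int_{\partial\mathbb{D}}\frac{1+xz}{1-xz}\,d\mu(x)
\]
for some Borel probability measure $\mu$ on $\partial\mathbb{D}$, and this correspondence $\mu\mapsto p$ is an affine bijection from the (weak-$*$ compact, convex) set of probability measures onto $\mathcal{P}_0$. Since affine homeomorphisms carry extreme points to extreme points, and since the extreme points of the set of probability measures on $\partial\mathbb{D}$ are precisely the point masses $\delta_x$, $|x|=1$, it follows that $\Ext\mathcal{P}_0$ consists exactly of the images of the $\delta_x$, namely the functions $(1+xz)/(1-xz)$ with $|x|=1$.

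The one point that needs a little care, and which I expect to be the main obstacle, is the injectivity of the Herglotz correspondence $\mu\mapsto p$: without it one cannot transfer extreme points back and forth. I would handle this by reading off the Taylor coefficients: if $p(z)=1+\sum_{n\ge1}c_nz^n$ then expanding $(1+xz)/(1-xz)=1+2\sum_{n\ge1}x^nz^n$ and integrating gives $c_n=2\int_{\partial\mathbb{D}}x^n\,d\mu(x)$, i.e. the coefficients of $p$ determine all the moments $\int x^n\,d\mu$ for $n\ge1$; taking conjugates gives the moments for $n\le -1$ as well, and the $n=0$ moment is $1$ since $\mu$ is a probability measure. Hence all Fourier coefficients of $\mu$ are determined by $p$, so $\mu$ is unique, and the map is a bijection. (Alternatively one may invoke the classical Herglotz uniqueness statement directly.)

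With injectivity in hand the argument is routine: the map $T:\mu\mapsto p$ is affine and, because uniform convergence on compacta of the $p$'s corresponds to weak-$*$ convergence of the $\mu$'s (again via the moments), it is a homeomorphism between the weak-$*$ compact convex set $\mathcal{M}$ of probability measures on $\partial\mathbb{D}$ and $\mathcal{P}_0$. An affine bijection sends $\Ext$ to $\Ext$, so $\Ext\mathcal{P}_0=T(\Ext\mathcal{M})$. It remains only to note that $\Ext\mathcal{M}=\{\delta_x:|x|=1\}$: any $\mu\in\mathcal{M}$ whose support contains two distinct points splits as a nontrivial convex combination by partitioning $\partial\mathbb{D}$ into two arcs of positive $\mu$-measure, while each $\delta_x$ is clearly extreme. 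Applying $T$ yields $\Ext\mathcal{P}_0=\{(1+xz)/(1-xz):|x|=1\}$, which is the claim. I would remark that this is precisely the route taken in Holland~\cite{Holland} and, via a purely functional-analytic argument avoiding the explicit integral formula, in Kortram~\cite{Kortram}, so one may simply cite either source; the sketch above indicates why the result holds.
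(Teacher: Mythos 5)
The paper does not prove this lemma at all: it is quoted as known, with the derivation attributed to Holland \cite{Holland} and, by a functional-analytic route, to Kortram \cite{Kortram}. Your argument --- Herglotz representation, uniqueness of the representing measure via the moments $c_n=2\int_{\partial\mathbb{D}}x^n\,d\mu(x)$ together with conjugation and density of trigonometric polynomials, transfer of extreme points under the affine bijection, and the identification of $\Ext$ of the probability measures with the point masses $\delta_x$ --- is correct and is essentially the classical derivation the cited sources give, so it is consistent with the approach the paper relies on by citation; the two points you single out (injectivity of $\mu\mapsto p$ and extremality of the $\delta_x$) are indeed the only places where care is needed, and you handle both correctly.
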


\begin{thm}\label{theorem 3}
\[
\Ext \mathcal{P}_{\lambda}=\{p_{\lambda}(xz), \,|x|=1\}.
\]
\end{thm}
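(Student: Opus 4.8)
The plan is to transfer the known extremal structure of $\mathcal{P}_0$ (Lemma \ref{lemma 2}) to $\mathcal{P}_\lambda$ via the affine bijection furnished by the equivalence (i) $\Leftrightarrow$ (ii) of Theorem \ref{theorem 1}. Define the map $L_\lambda : \mathcal{A}_0 \to \mathcal{A}_0$ by
\[
L_\lambda(p)(z) = \frac{e^{i\lambda} p(z) - i\sin\lambda}{\cos\lambda},
\]
and note its inverse is $L_\lambda^{-1}(q)(z) = e^{-i\lambda}(q(z)\cos\lambda + i\sin\lambda)$. Both $L_\lambda$ and its inverse are affine (of the form $p \mapsto ap+b$ with $a\neq 0$) and continuous in the topology of locally uniform convergence, and by Theorem \ref{theorem 1}(ii) they restrict to mutually inverse bijections between $\mathcal{P}_\lambda$ and $\mathcal{P}_0$.

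The key observation is that an affine bijection between convex sets carries extreme points to extreme points: if $p_0 \in \mathcal{P}_\lambda$ were a nontrivial convex combination $p_0 = t q_1 + (1-t) q_2$ with $q_1, q_2 \in \mathcal{P}_\lambda$ distinct and $0<t<1$, then applying $L_\lambda$ and using its affinity gives $L_\lambda(p_0) = t\, L_\lambda(q_1) + (1-t) L_\lambda(q_2)$, a nontrivial convex combination in $\mathcal{P}_0$ of distinct elements; conversely the same argument applies to $L_\lambda^{-1}$. Hence $p_0 \in \Ext\mathcal{P}_\lambda$ if and only if $L_\lambda(p_0) \in \Ext\mathcal{P}_0$, i.e.
\[
\Ext\mathcal{P}_\lambda = L_\lambda^{-1}\bigl(\Ext\mathcal{P}_0\bigr).
\]

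It then remains to compute this image explicitly. By Lemma \ref{lemma 2}, $\Ext\mathcal{P}_0 = \{(1+xz)/(1-xz) : |x|=1\}$, so I would substitute $q(z) = (1+xz)/(1-xz)$ into $L_\lambda^{-1}$ and simplify:
\[
L_\lambda^{-1}(q)(z) = e^{-i\lambda}\left(\frac{1+xz}{1-xz}\cos\lambda + i\sin\lambda\right)
= e^{-i\lambda}\cdot \frac{(\cos\lambda + i\sin\lambda) + (\cos\lambda - i\sin\lambda)xz}{1-xz}
= \frac{1 + e^{-2i\lambda} xz}{1-xz},
\]
which is exactly $p_\lambda(xz)$. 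As $x$ ranges over $\partial\mathbb{D}$ this produces precisely the family $\{p_\lambda(xz) : |x|=1\}$, giving the claimed identity.

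The only real point requiring care is the verification that $L_\lambda$ is a genuine affine \emph{bijection} of the two convex sets — i.e. that it is well-defined on all of $\mathcal{A}_0$, maps $\mathcal{P}_\lambda$ onto (not merely into) $\mathcal{P}_0$, and that its normalization at $z=0$ is preserved (indeed $L_\lambda(p)(0) = (e^{i\lambda}\cdot 1 - i\sin\lambda)/\cos\lambda = 1$). All of this is immediate from $\cos\lambda \neq 0$ and Theorem \ref{theorem 1}; so the proof is short, with the extreme-point transfer under affine maps being the conceptual heart and the closed-form simplification of $L_\lambda^{-1}((1+xz)/(1-xz))$ the one computation to perform.
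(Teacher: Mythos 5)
Your proof is correct and is essentially the paper's own argument: the paper proves Theorem \ref{theorem 3} precisely by combining Lemma \ref{lemma 2} with the equivalence of (i) and (ii) in Theorem \ref{theorem 1}, i.e.\ by transferring extreme points through the affine bijection you call $L_\lambda$; you have merely written out explicitly the extreme-point preservation and the computation of $L_\lambda^{-1}\bigl((1+xz)/(1-xz)\bigr)=p_\lambda(xz)$ that the paper leaves implicit.
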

\begin{proof}
 A combination of Lemma $\ref{lemma 2}$ and the equivalence of (i) and (ii) in Theorem $\ref{theorem 1}$ implies our assertion. 
\end{proof} 

The next result which can be found in [7, p. 45] is a useful technique to solve extremal problems. If $\mathcal{F}$ is a convex subset of $\mathcal{A}$ and $J:\, \mathcal{A}\rightarrow \mathbb{R}$, then $J$ is called \textit{convex} on $\mathcal{F}$ provided that $J(tf+(1-t)g)\leq tJ(f)+(1-t)J(g)$ whenever $f,\, g \in\mathcal{F}$ and $0\leq t\leq 1$.

\begin{lem}\label{lemma 3}
Let $\mathcal{F}$ be a compact and convex subset of $\mathcal{A}$ and let $J$ be a real-valued, continuous, convex functional on $\mathcal{F}$. Then
\[
\max\{J(f):\, f\in\mathcal{F}\}=\max\{J(f):\, f\in\Ext\mathcal{F}\}.
\]
\end{lem}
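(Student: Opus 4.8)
The plan is to deduce this from Bauer's maximum principle, proving the two inequalities separately. One direction is immediate: since $\Ext\mathcal{F}\subseteq\mathcal{F}$, we have $\max\{J(f):f\in\Ext\mathcal{F}\}\le\max\{J(f):f\in\mathcal{F}\}$, and the right-hand maximum is attained because $\mathcal{F}$ is compact and $J$ is continuous. For the reverse inequality I would fix a maximizer $f_{0}\in\mathcal{F}$, put $M=J(f_{0})$, and consider the level set $\mathcal{F}_{0}=\{f\in\mathcal{F}:J(f)=M\}$; the whole point is to locate an extreme point of $\mathcal{F}$ inside $\mathcal{F}_{0}$.

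The first step is to observe that $\mathcal{F}_{0}$ is a nonempty, compact \emph{extremal subset} (face) of $\mathcal{F}$: it is closed as the preimage of $\{M\}$ under the continuous functional $J$, hence compact, and if $tf+(1-t)g\in\mathcal{F}_{0}$ with $f,g\in\mathcal{F}$ and $0<t<1$, then convexity of $J$ forces $M=J(tf+(1-t)g)\le tJ(f)+(1-t)J(g)\le M$, so $J(f)=J(g)=M$, i.e.\ $f,g\in\mathcal{F}_{0}$. Next I would apply Zorn's lemma to the family $\mathfrak{S}$ of all nonempty compact faces of $\mathcal{F}$ contained in $\mathcal{F}_{0}$, ordered by reverse inclusion. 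This family is nonempty ($\mathcal{F}_{0}\in\mathfrak{S}$), and any chain admits an upper bound, namely its intersection: the intersection is nonempty by compactness and the finite intersection property, and one checks directly that an arbitrary intersection of faces is again a face. Zorn's lemma then produces a minimal element $S\in\mathfrak{S}$.

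The final step is to show $S$ is a single point. Since $\mathcal{A}$ with the topology of uniform convergence on compact subsets of $\mathbb{D}$ is a locally convex Hausdorff topological vector space (even the coefficient functionals separate points), if $S$ contained two distinct functions $p\ne q$ the Hahn--Banach theorem would supply a continuous real-linear functional $\ell$ on $\mathcal{A}$ with $\ell(p)\ne\ell(q)$. Then $S'=\{f\in S:\ell(f)=\max_{g\in S}\ell(g)\}$ is a nonempty compact proper subset of $S$, and because $\ell$ is affine the same computation as before shows $S'$ is a face of $S$; since a face of a face is a face, $S'\in\mathfrak{S}$, contradicting minimality of $S$. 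Hence $S=\{f^{\ast}\}$ for some $f^{\ast}$, a singleton face of $\mathcal{F}$ is exactly an extreme point of $\mathcal{F}$, and $f^{\ast}\in\mathcal{F}_{0}$ gives $J(f^{\ast})=M$; this yields $\max\{J(f):f\in\Ext\mathcal{F}\}\ge M$ and completes the proof.

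The main obstacle is really just careful bookkeeping in the abstract convexity setup: checking that $\mathcal{A}$ is locally convex so that Hahn--Banach separation (and, implicitly, the Krein--Milman mechanism) is available here, and verifying that level sets of $J$ and max-sets of linear functionals are genuine faces and that faces are stable under intersection — everything else is routine. A shorter but slightly less self-contained alternative is worth mentioning: by the Krein--Milman theorem $\mathcal{F}=\overline{\operatorname{co}}\,\Ext\mathcal{F}$, so the maximizer $f_{0}$ is a limit of convex combinations $\sum_{j}t_{j}^{(n)}e_{j}^{(n)}$ of extreme points (or the barycenter of a Borel probability measure on $\overline{\Ext\mathcal{F}}$, in the spirit of the Herglotz-type representation used in Theorem~\ref{theorem 1}); convexity of $J$ gives $J\bigl(\sum_{j}t_{j}^{(n)}e_{j}^{(n)}\bigr)\le\max_{\Ext\mathcal{F}}J$, and letting $n\to\infty$ and using continuity of $J$ yields $M=J(f_{0})\le\max_{\Ext\mathcal{F}}J$, which together with the trivial inequality proves the claim.
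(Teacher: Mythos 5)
Your proof is correct, and it is genuinely different from what the paper does, because the paper does not prove this lemma at all: it states it as a known result quoted from the literature (``[7, p.~45]'', i.e.\ the standard convexity-techniques source) and then uses it as a black box. Your write-up is essentially Bauer's maximum principle made self-contained: the level set $\mathcal{F}_{0}=\{f\in\mathcal{F}: J(f)=\max_{\mathcal{F}}J\}$ is a nonempty compact face because $J$ is continuous and convex, chains of compact faces have nonempty compact intersections which are again faces, Zorn's lemma gives a minimal face, and Hahn--Banach separation (legitimate here, since $\mathcal{A}$ with the topology of locally uniform convergence is a locally convex Hausdorff Fr\'echet space in which the coefficient functionals separate points) forces a minimal face to be a singleton, hence an extreme point lying in $\mathcal{F}_{0}$; together with the trivial inequality this gives the stated equality, including attainment of the maximum on $\Ext\mathcal{F}$. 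What your route costs is the abstract machinery (Zorn, separation, the face calculus); what it buys is independence from the cited monograph and an explicit proof that the maximum is actually attained at an extreme point, which is exactly how the lemma is used later to pin down the extremal functions $p_{\lambda}(xz)$ in Theorems~\ref{theorem 4} and~\ref{theorem 5}. One caution about your shorter Krein--Milman alternative: that sketch only yields $\max_{\mathcal{F}}J=\sup_{\Ext\mathcal{F}}J$, and since $\Ext\mathcal{F}$ need not be closed, attainment at an extreme point (the ``max'' asserted in the statement) does not follow from it without further argument; so the Zorn/Hahn--Banach argument should remain your primary proof.
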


\begin{thm}\label{theorem 4}
Let $p(z)=1+\sum_{n=1}^{\infty}p_{n}z^n \in \mathcal{P}_{\lambda}$ with $\lambda\in (-\pi/2,\pi/2)$, then
\[
|p_{n}|\leq 2\cos\lambda,
\]
and
\[
|p'(z)|\leq \frac{2\cos\lambda}{(1-r)^2},
\]
where $r=|z|<1$. The inequalities are sharp with extremal functions $p_{\lambda}(xz)$ where $|x|=1$.
\end{thm}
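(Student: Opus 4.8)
The plan is to combine Lemma~\ref{lemma 3} with the description of the extreme points of $\mathcal{P}_{\lambda}$ furnished by Theorem~\ref{theorem 3}. For each fixed integer $n\ge 1$ the $n$-th coefficient functional $p\mapsto p_{n}$ is continuous and complex-linear on $\mathcal{A}_{0}$, so $p\mapsto |p_{n}|$ is a continuous convex functional on the compact convex set $\mathcal{P}_{\lambda}$; likewise, for each fixed $z\in\mathbb{D}$ the functional $p\mapsto |p'(z)|$ is continuous and convex, being the modulus of the continuous complex-linear functional $p\mapsto p'(z)$ (differentiation and point evaluation are both continuous in the topology of local uniform convergence). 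Hence by Lemma~\ref{lemma 3} the maximum of $|p_{n}|$ (respectively of $|p'(z)|$) over $\mathcal{P}_{\lambda}$ is attained at an extreme point, and by Theorem~\ref{theorem 3} these are exactly the functions $p_{\lambda}(xz)$ with $|x|=1$. It therefore suffices to evaluate both quantities on $p_{\lambda}(xz)$.

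Expanding $p_{\lambda}(z)=(1+e^{-2i\lambda}z)\sum_{k\ge 0}z^{k}=1+(1+e^{-2i\lambda})\sum_{k\ge 1}z^{k}$, we see that the coefficient of $z^{n}$ in $p_{\lambda}(xz)$ is $(1+e^{-2i\lambda})x^{n}$, with modulus $|1+e^{-2i\lambda}|=|2e^{-i\lambda}\cos\lambda|=2\cos\lambda$ (recall $|\lambda|<\pi/2$, so $\cos\lambda>0$); this gives $|p_{n}|\le 2\cos\lambda$, with equality whenever $p=p_{\lambda}(xz)$. A direct differentiation yields $p_{\lambda}'(z)=(1+e^{-2i\lambda})/(1-z)^{2}$, so $(p_{\lambda}(xz))'=x(1+e^{-2i\lambda})(1-xz)^{-2}$ has modulus $2\cos\lambda/|1-xz|^{2}$. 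Since $|1-xz|\ge 1-r$ when $|x|=1$ and $|z|=r$, with equality for the $x$ making $xz=r$, we obtain $|p'(z)|\le 2\cos\lambda/(1-r)^{2}$, sharp for the functions $p_{\lambda}(xz)$.

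An alternative (and, for the coefficient bound, even shorter) argument bypasses the convexity remarks by using the affine correspondence in Theorem~\ref{theorem 1}: if $p\in\mathcal{P}_{\lambda}$ then $q:=(e^{i\lambda}p-i\sin\lambda)/\cos\lambda\in\mathcal{P}_{0}$, and comparing Taylor coefficients in $e^{i\lambda}p(z)=\cos\lambda\,q(z)+i\sin\lambda$ gives $p_{n}=e^{-i\lambda}\cos\lambda\,q_{n}$ and, after differentiating, $p'(z)=e^{-i\lambda}\cos\lambda\,q'(z)$; the classical Carath\'eodory estimates $|q_{n}|\le 2$ and $|q'(z)|\le 2/(1-r)^{2}$ then finish the proof. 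In either approach there is no genuine obstacle: once Theorem~\ref{theorem 3} (or the known bounds for $\mathcal{P}_{0}$) is in hand, the statement is pure bookkeeping, the only mildly technical point being the sharp $\mathcal{P}_{0}$ distortion bound $|q'(z)|\le 2/(1-r)^{2}$, which follows from $|q'(z)/q(z)|\le 2/(1-r^{2})$ together with the growth estimate $|q(z)|\le(1+r)/(1-r)$.
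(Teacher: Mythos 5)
Your main argument is exactly the paper's proof: check that the two functionals are real-valued, continuous and convex, invoke Lemma~\ref{lemma 3} together with the extreme points $p_{\lambda}(xz)$ from Theorem~\ref{theorem 3}, and then evaluate the coefficients and derivative of $p_{\lambda}(xz)$ (your computation with $1+e^{-2i\lambda}$ is the correct one; the paper's $1+e^{2i\lambda}$ is a harmless typo since only the modulus $2\cos\lambda$ matters). Your alternative via the affine correspondence $q=(e^{i\lambda}p-i\sin\lambda)/\cos\lambda\in\mathcal{P}_{0}$ and the classical Carath\'eodory bounds is also correct, but the primary route is essentially identical to the paper's.
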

\begin{proof} 
It is easy to check that the above two functionals are real-valued, continuous and convex, thus by applying Lemma $\ref{lemma 3}$ the maximums of them are obtained over the reduced subset $\Ext \mathcal{P}_{\lambda}$, which is the set of $p_{\lambda}(xz)$ where $|x|=1$ in view of Lemma $\ref{lemma 2}$. Since
\[
p_{\lambda}(xz)=1+(1+e^{2i\lambda})\sum_{n=1}^{\infty}x^nz^n
\]
and
\[
p_{\lambda}'(xz)=\frac{1+e^{2i\lambda}}{(1-xz)^2},
\]
we complete the proof.
\end{proof}

\begin{thm}\label{theorem 5}
Let $p(z)\in \mathcal{P}_{\lambda}$ with $\lambda\in (-\pi/2,\pi/2)$, then
\[
\left|p(z)-\left(\frac{1}{1-r^2}+\frac{r^2}{1-r^2}e^{-2i\lambda}\right)\right|\leq\frac{2r\cos \lambda}{1-r^2}
\]
where $r=|z|<1$. In particular we have
\[
\frac{1+r^2\cos 2\lambda -2r\cos \lambda}{1-r^2}\leq \Re p(z)\leq \frac{1+r^2\cos 2\lambda +2r\cos \lambda}{1-r^2}
\]
and
\[
1/A(\lambda,r)\leq|p(z)|\leq A(\lambda,r)
\]
where $A(\lambda,r)$ is given by
\begin{equation}{\label{A(r)}}
A(\lambda,r)=\frac{\sqrt{({1-r^2})^2+4r^2\cos^2\lambda}+2r\cos\lambda}{1-r^2}.
\end{equation}
Those inequalities are sharp with extremal functions $p_{\lambda}(xz)$ where $|x|=1$.
\end{thm}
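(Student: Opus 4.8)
The plan is to reduce everything to the classical Carath\'eodory class by means of the linear correspondence already recorded in Theorem~\ref{theorem 1}. Given $p\in\mathcal P_\lambda$, put $q=(e^{i\lambda}p-i\sin\lambda)/\cos\lambda$; by the equivalence of (i) and (ii) in Theorem~\ref{theorem 1} we have $q\in\mathcal P_0$. For functions in $\mathcal P_0$ the well-known disc estimate holds: if $|z|=r<1$, then
\[
\left|q(z)-\frac{1+r^2}{1-r^2}\right|\le\frac{2r}{1-r^2},
\]
which may either be cited or obtained from the Herglotz representation (the case $\lambda=0$ of condition (iii) in Theorem~\ref{theorem 1}) together with the convexity of a disc.

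Next I would invert the correspondence, writing $p=\cos\lambda\, e^{-i\lambda}q+i\sin\lambda\, e^{-i\lambda}$, so that
\[
p(z)-e^{-i\lambda}\!\left(\frac{(1+r^2)\cos\lambda}{1-r^2}+i\sin\lambda\right)=\cos\lambda\, e^{-i\lambda}\!\left(q(z)-\frac{1+r^2}{1-r^2}\right).
\]
Since $|e^{-i\lambda}|=1$, taking moduli and using the $\mathcal P_0$ estimate bounds the right-hand side by $2r\cos\lambda/(1-r^2)$. A short computation — expand $e^{-i\lambda}=\cos\lambda-i\sin\lambda$ and use $\cos2\lambda=\cos^2\lambda-\sin^2\lambda$ — shows that the subtracted center equals $\tfrac1{1-r^2}+\tfrac{r^2}{1-r^2}e^{-2i\lambda}$, which is precisely the first displayed inequality of the theorem; note that its real and imaginary parts are $(1+r^2\cos2\lambda)/(1-r^2)$ and $-r^2\sin2\lambda/(1-r^2)$ respectively.

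The two remaining pairs of inequalities are elementary geometric consequences of this disc containment. Projecting the disc $|w-c|\le R$, with $c$ as above and $R=2r\cos\lambda/(1-r^2)$, onto the real axis yields $\Re c-R\le\Re p(z)\le\Re c+R$, which is the stated estimate for $\Re p(z)$. For the modulus I would compute
\[
|c|^2=\frac{|1+r^2e^{-2i\lambda}|^2}{(1-r^2)^2}=\frac{(1-r^2)^2+4r^2\cos^2\lambda}{(1-r^2)^2},
\]
so that $|c|+R=A(\lambda,r)$ and hence $|p(z)|\le A(\lambda,r)$; moreover $|p(z)|\ge|c|-R$, and since $|c|>R$ (equivalently $(1-r^2)^2>0$) rationalising the expression $|c|-R$ gives exactly $1/A(\lambda,r)$.

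Finally, sharpness is immediate once one observes that the $\mathcal P_0$-extremal function $(1+z)/(1-z)$ carries the circle $|z|=r$ onto the whole boundary circle $|w-(1+r^2)/(1-r^2)|=2r/(1-r^2)$; pushing this through the linear map shows that $p_\lambda$ maps $|z|=r$ onto the entire circle $|w-c|=R$, so in particular the points realising the extreme values of $\Re w$ and of $|w|$ on that circle are attained by suitable $p_\lambda(xz)$ with $|x|=1$. There is no real obstacle here: the only thing requiring care is the bookkeeping that identifies the center with $\tfrac1{1-r^2}+\tfrac{r^2}{1-r^2}e^{-2i\lambda}$ and the verification of the algebraic identity $|c|-R=1/A(\lambda,r)$.
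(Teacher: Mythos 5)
Your proof is correct, and it follows a genuinely different route from the paper's. The paper treats $J(p)=\bigl|p(z)-c\bigr|$ (with $c=\tfrac{1}{1-r^2}+\tfrac{r^2}{1-r^2}e^{-2i\lambda}$ fixed) as a continuous convex functional on the compact convex set $\mathcal{P}_{\lambda}$, invokes Lemma \ref{lemma 3} together with Theorem \ref{theorem 3} ($\Ext\mathcal{P}_{\lambda}=\{p_{\lambda}(xz):|x|=1\}$) to reduce the maximization to the extreme points, and then verifies by direct computation that $\bigl|p_{\lambda}(xz)-c\bigr|=2r\cos\lambda/(1-r^2)$ on $|z|=r$; the remaining estimates are, as in your argument, read off from this disc containment. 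You instead transport the classical Carath\'eodory disc estimate $\bigl|q(z)-\tfrac{1+r^2}{1-r^2}\bigr|\le\tfrac{2r}{1-r^2}$ through the affine correspondence $p=\cos\lambda\,e^{-i\lambda}q+i\sin\lambda\,e^{-i\lambda}$ of Theorem \ref{theorem 1} (i)$\Leftrightarrow$(ii); your identification of the transported center with $c$, the computation $|c|^2-R^2=1$ giving the lower bound $1/A(\lambda,r)$, and the sharpness argument (the M\"obius map $p_{\lambda}$ carries $|z|=r$ onto the full circle $|w-c|=R$, so for fixed $z$ the family $p_{\lambda}(xz)$, $|x|=1$, hits every boundary point) are all correct. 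What your route buys is independence from the extreme-point machinery (Lemma \ref{lemma 3}, Theorem \ref{theorem 3}) at the cost of citing the standard $\mathcal{P}_{0}$ growth inequality, and your sharpness discussion is actually more explicit than the paper's; what the paper's route buys is uniformity of method across Theorems \ref{theorem 4} and \ref{theorem 5} and no reliance on the classical estimate beyond the Herglotz-based Lemma \ref{lemma 2}.
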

\begin{proof}
 By using the same arguments as in Theorem $\ref{theorem 4}$, the maximum of the first functional over $\mathcal{P}_{\lambda}$ is obtained over the set of $p_{\lambda}(xz)$ where $|x|=1$. Therefore
\begin{equation*}
\begin{split}
&\left|p_{\lambda}(xz)-\left(\frac{1}{1-r^2}+\frac{r^2}{1-r^2}e^{-2i\lambda}\right) \right|\\
&=\left|\frac{(1+e^{2i\lambda})(xz-r^2)}{(1-xz)(1-r^2)}\right|\\
&=\frac{2\cos \lambda}{1-r^2}\left| \frac{xz-r^2}{1-xz}\right|\\
&=\frac{2r\cos \lambda}{1-r^2}
\end{split}
\end{equation*}
where $r=|z|<1$ and $|x|=1$, which is what we want. The other estimates can be deduced directly from the first one.
\end{proof}

\begin{remark}
The first inequality in Theorem \ref{theorem 5} implies that the function $p\in\mathcal{P}_{\lambda}$ maps the disc $|z|<r<1$  into $U(\lambda, r)$ which is the hyperbolic disc in the tilted half plane $\mathbb{H}_{\lambda}$ centered at 1 with radius $ \arctan r$.
\end{remark}

\par Although the extremal problems of real-valued continuous linear functionals over $\mathcal{P}_{\lambda}$ can be solved within the set  $p_{\lambda}(xz)$, $|x|=1$, it is not applicable for general functionals. Robertson \cite{Robertson1} \cite{Robertson2} and Sakaguchi \cite{Sakaguchi} obtained variational formulae for $\mathcal{P}_{0}$ and showed that, for fixed $z\in\mathbb{D}$, the extremal values of 
\[
\Re F(p(z), zp'(z)), \,\,p\in\mathcal{P}_{0},
\]
where $F(u,v)$ is analytic in $(u,v)\in\mathbb{C}^2$, $\Re u>0$, are already attained by the functions
\[
t\left(\frac{1+xz}{1-xz}\right)+(1-t)\left(\frac{1+\bar{x}z}{1-\bar{x}z}\right), \,\, 0\leq t\leq 1,\,\, |x|=1.
\]
Since there exists a linear relation between $\mathcal{P}_{0}$ and $\mathcal{P}_{\lambda}$, we can see that for fixed $z\in\mathbb{D}$, the extremal values of 
\[
\Re F(p(z), zp'(z)), \,\,p\in\mathcal{P}_{\lambda},
\]
where $F(u,v)$ is analytic in $(u,v)\in\mathbb{C}^2$, $\Re e^{i\lambda}u>0$, are attained by the functions
\[
t\left(\frac{1+xe^{-2i\lambda}z}{1-xz}\right)+(1-t)\left(\frac{1+\bar{x}e^{-2i\lambda}z}{1-\bar{x}z}\right), \,\, 0\leq t\leq 1,\,\, |x|=1.
\]   
On the other hand, it follows from the Duality Principle [18, Corollary 1.1] and Remark 1 that the extremal values of 
\[ 
\frac{F_{1}(p)}{F_{2}(p)}
\]
where $F_{1}$ and $F_{2}$ are real-valued continuous linear functionals over $\mathcal{P}_{\lambda}$ with $F_{2}(P)\not =0$ for $p\in\mathcal{P}_{\lambda}$ are attained by a function in $W_{\lambda}$. It is clear that it is not easy to obtain extremal values even for those restricted classes of functions. Next we will give an estimate which can be solved by elementary functional analysis.

\begin{thm}\label{theorem 6}
\par Let $p\in \mathcal{P}_{\lambda}$ with $\lambda\in(-\pi/2,\pi/2)$, then
\[
\left|\frac{zp'(z)}{p(z)}\right|\leq M(\lambda,|z|),
\]
where 
\begin{equation}{\label{M2(r)}}
M(\lambda,r)=
\begin{cases}
\dfrac{2r\cos \lambda}{1+r^2-2r|\sin \lambda|}, \quad r<|\tan(\lambda/2)|,\\
\dfrac{2r}{1-r^2}, \quad r\geq |\tan(\lambda/2)|.\\
\end{cases}
\end{equation}
Equality holds for some point $z_{0}=re^{i\theta}$, $0<r<1$, if and only if $p(z)=p_{\lambda}(xz)$ where $x=e^{i(\alpha-\theta)}$ with $\alpha$ satisfying 
\begin{equation}\label{maximum point}
\begin{cases}
\alpha=\pi/2+\lambda, \quad r<-\tan(\lambda/2),\\
\alpha=-\pi/2+\lambda,\quad r<\tan(\lambda/2),\\
\sin(\alpha-\lambda)=-\frac{1+r^2}{1-r^2}\sin\lambda,\quad r\geq |\tan(\lambda/2)|.
\end{cases}
\end{equation}
\end{thm}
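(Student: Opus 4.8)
The plan is to reduce the problem to a calculation over the extremal family and then optimize. By Theorem~\ref{theorem 1}(iv) every $p\in\mathcal{P}_{\lambda}$ satisfies $p\prec p_{\lambda}$, so $p=p_{\lambda}\circ\omega$ for some Schwarz function $\omega$. Then $zp'(z)=z\omega'(z)\,p_{\lambda}'(\omega(z))$, and a direct computation with $p_{\lambda}(w)=(1+e^{-2i\lambda}w)/(1-w)$ gives
\[
\frac{zp'(z)}{p(z)}=\frac{z\omega'(z)}{1-\omega(z)}\cdot\frac{(1+e^{-2i\lambda})}{1+e^{-2i\lambda}\omega(z)}.
\]
By the Schwarz--Pick lemma, $|z\omega'(z)|\le \dfrac{|z|^2-|\omega(z)|^2}{1-|z|^2}\cdot\dfrac{1-|\omega(z)|^2}{|z|^2-|\omega(z)|^2}$ — more precisely $|z\omega'(z)|\le |z|\dfrac{1-|\omega(z)|^2/|z|^2}{\cdots}$; the clean statement I will use is $|\omega'(z)|\le (1-|\omega(z)|^2)/(1-|z|^2)$, hence $|z\omega'(z)|\le |z|(1-|\omega(z)|^2)/(1-|z|^2)$. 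Writing $w=\omega(z)$, $|w|\le |z|=r$, this bounds $|zp'(z)/p(z)|$ by
\[
\frac{r(1-|w|^2)}{1-r^2}\cdot\frac{2\cos\lambda}{|1-w|\,|1+e^{-2i\lambda}w|}.
\]
So the whole problem becomes: maximize $g(w)=\dfrac{1-|w|^2}{|1-w|\,|1+e^{-2i\lambda}w|}$ over the closed disc $|w|\le r$, then multiply by $r/(1-r^2)$.

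Next I would analyze $g$. The two zeros of the denominator are $w=1$ and $w=-e^{2i\lambda}$, both on the unit circle; note they are reflections of each other since $(-e^{2i\lambda})\cdot\overline{1}$... rather, the relevant observation is that $|1-w|\,|1+e^{-2i\lambda}w| = |(1-w)(1+e^{-2i\lambda}w)|=|1+(e^{-2i\lambda}-1)w-e^{-2i\lambda}w^2|$. A cleaner route: write the denominator as $|1-w||{-e^{2i\lambda}}-w|\cdot|{-e^{-2i\lambda}}|=|1-w||w+e^{2i\lambda}|$. So $g(w)=\dfrac{1-|w|^2}{|1-w|\,|w+e^{2i\lambda}|}$. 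For fixed $|w|=\rho\le r$, the numerator is fixed, so I must \emph{minimize} $|1-w||w+e^{2i\lambda}|$ on the circle $|w|=\rho$. This is where the two cases in \eqref{M2(r)} originate: the product of distances from a point on the circle $|w|=\rho$ to the two fixed points $1$ and $-e^{2i\lambda}$ (both at distance $1$ from the origin). Parametrize $w=\rho e^{it}$; then $|1-w|^2=1+\rho^2-2\rho\cos t$ and $|w+e^{2i\lambda}|^2=1+\rho^2-2\rho\cos(t-(\pi-2\lambda))=1+\rho^2+2\rho\cos(t+2\lambda)$. Set $\Phi(t)=(1+\rho^2-2\rho\cos t)(1+\rho^2+2\rho\cos(t+2\lambda))$ and find its minimum in $t$; differentiating and using product-to-sum identities reduces $\Phi'(t)=0$ to a condition that, after simplification, reads $\sin(2t+2\lambda)=\tfrac{2\rho}{1+\rho^2}\sin 2\lambda$ or a boundary/degenerate configuration. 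The boundary situation (critical point forced to an endpoint of the feasible $t$-range, or the minimizing configuration being $w$ real) is exactly what produces the first branch $r<|\tan(\lambda/2)|$, because $|\tan(\lambda/2)|$ is precisely the value of $\rho$ at which $\tfrac{2\rho}{1+\rho^2}|\sin 2\lambda|$ first can equal $|\sin 2\lambda|$... more accurately, where the geometry transitions: one checks $\tfrac{2\rho}{1+\rho^2}=\cos\lambda/\cdots$; in any case the threshold comes out of setting the relevant sine equal to $1$ or comparing it with the constraint $\rho\le r$.

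After locating $\min_t\Phi(t)$ for each $\rho$, I would substitute back and maximize $\dfrac{\rho(1-\rho^2)^{1/2}\cdots}{}$ — actually the factor $r/(1-r^2)$ out front came from the Schwarz--Pick bound at $|z|=r$, and the $1-|w|^2$ in $g$ wants $|w|$ small while the denominator wants $|w|$ on the boundary, so I expect the maximum of $g$ over $|w|\le r$ to be attained on $|w|=r$ in one regime and possibly at an interior critical radius in the other; this is where Case 1 versus Case 2 is finally sorted out. In Case $r\ge|\tan(\lambda/2)|$ the answer $2r/(1-r^2)$ is what one gets when the two distance-factors can be ``balanced'' optimally, effectively as if $\lambda=0$; in Case $r<|\tan(\lambda/2)|$ the constrained minimum of the denominator is $1+r^2-2r|\sin\lambda|$ attained at $w=r e^{it}$ with $t$ as in \eqref{maximum point}. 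The equality analysis then follows by tracing back: equality in Schwarz--Pick forces $\omega$ to be a disc automorphism, and matching $\omega(z_0)$ to the optimal $w$ forces $\omega(z)=xz$ with $x=e^{i(\alpha-\theta)}$, giving $p(z)=p_{\lambda}(xz)$ and the stated values of $\alpha$.

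\textbf{Main obstacle.} The genuinely delicate part is the trigonometric optimization of $\Phi(t)$ on the circle $|w|=\rho$ together with the subsequent optimization over $\rho\in(0,r]$ — in particular, proving cleanly that the case boundary is exactly $r=|\tan(\lambda/2)|$ and that in the subcritical regime the optimal $w$ lies on the boundary circle rather than strictly inside. I would organize this by first fixing $\rho$, showing $\min_t\Phi(t)$ equals $(1+\rho^2-2\rho|\sin\lambda|)^2$ when a certain discriminant-type quantity is nonnegative and a smaller (more ``balanced'') value otherwise, and only then performing the outer maximization; an alternative, possibly shorter, route is to use the representation $p_{\lambda}(xz)=(1+e^{2i\lambda}xz)/(1-xz)$ directly, compute $zp_{\lambda}'(xz)/p_{\lambda}(xz)=\dfrac{(1+e^{2i\lambda})xz}{(1-xz)(1+e^{2i\lambda}xz)}$, and maximize its modulus over $|x|=1$ for fixed $z=re^{i\theta}$ after invoking Theorem~\ref{theorem 3} (extreme points) plus the subordination reduction above to justify that the supremum over all of $\mathcal{P}_{\lambda}$ is the same — though since $z\mapsto |zp'(z)/p(z)|$ is not a convex functional, one cannot cite Lemma~\ref{lemma 3} directly, so the Schwarz--Pick argument is the safer backbone.
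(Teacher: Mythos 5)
Your strategy coincides with the paper's: the paper first proves the sharp estimate for $p_{\lambda}$ itself on circles (Lemma \ref{lemma 4}, by exactly the trigonometric minimization you describe) and then composes with a Schwarz function via Schwarz--Pick plus a monotonicity argument in $|\omega(z)|$; you merge those two steps into a single maximization of $g(w)=\frac{1-|w|^2}{|1-w|\,|w+e^{2i\lambda}|}$ over $|w|\le r$. The plan is therefore sound, but the parts you defer are the entire content of the proof, and the one concrete formula you offer for them is wrong. With $w=\rho e^{it}$ one has $|w+e^{2i\lambda}|^2=1+\rho^2+2\rho\cos(t-2\lambda)$ (not $t+2\lambda$), and the critical-point condition is not $\sin(2t+2\lambda)=\frac{2\rho}{1+\rho^2}\sin 2\lambda$: differentiating the product gives the factorization $\Phi'(t)=4\rho\cos(t-\lambda)\bigl[(1+\rho^2)\sin\lambda+2\rho\sin(t-\lambda)\bigr]$. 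The second factor vanishes for some $t$ exactly when $(1+\rho^2)|\sin\lambda|\le 2\rho$, i.e. $\rho\ge|\tan(\lambda/2)|$, and at such $t$ one computes $\Phi=\cos^2\lambda\,(1-\rho^2)^2$, while in the opposite case $\min_t\Phi=(1+\rho^2-2\rho|\sin\lambda|)^2$, attained at $t=\lambda-\pi/2$ for $\lambda>0$ and $t=\lambda+\pi/2$ for $\lambda<0$. That factorization, not an unspecified ``discriminant-type quantity'', is the source of the case boundary. The outer step is likewise only guessed at: one must check that $\rho\mapsto\frac{1-\rho^2}{1+\rho^2-2\rho|\sin\lambda|}$ is increasing exactly while $(1+\rho^2)|\sin\lambda|\ge 2\rho$ (a one-line computation), and observe that for $\rho\ge|\tan(\lambda/2)|$ the circle maximum of $g$ is the constant $1/\cos\lambda$; hence the supremum over $|w|\le r$ sits on $|w|=r$ in the subcritical regime and equals $1/\cos\lambda$ otherwise, and there is no ``interior critical radius'' scenario. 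This is precisely the monotonicity step the paper's proof performs after invoking Lemma \ref{lemma 4}.

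The equality assertion also needs more than the sentence you give it. Equality in Schwarz--Pick at an interior point together with $\omega(0)=0$ does force $\omega(z)=xz$ with $|x|=1$; but you must then solve the critical-point equation at $\rho=r$ to identify the admissible angle $\alpha=\arg(xz_{0})$, and at this stage the $t+2\lambda$ versus $t-2\lambda$ slip is no longer harmless. Carried out correctly it yields $\alpha=\lambda+\pi/2$ or $\alpha=\lambda-\pi/2$ in the two subcritical cases and $\sin(\alpha-\lambda)=-\frac{(1+r^2)\sin\lambda}{2r}$ when $r\ge|\tan(\lambda/2)|$. Be aware that the third line of \eqref{maximum point} as printed, with the factor $\frac{1+r^2}{1-r^2}$, appears to be a misprint (its right-hand side can exceed $1$ in modulus for admissible $r$, so no such $\alpha$ would exist), so your equality analysis should be matched against the $\frac{1+r^2}{2r}$ version coming from the factorization above rather than against the displayed formula verbatim.
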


\begin{remark}
For fixed $0<r<1$, $M(\lambda,r)$ is a symmetric function in $\lambda$ with respect to the origin and it is also decreasing in $0\leq\lambda<\pi/2$. 
We thus have  $M(\lambda,r)\leq M(0,r)=2r/(1-r^2)$ for any $\lambda\in(-\pi/2,\pi/2)$ which is a known result for Gelfer functions ([5, p. 73], see also \cite{Yamashita}, \cite{Gelfer} or \cite{Sugawa}).
\end{remark}

\par In order to prove the above theorem, the following lemma is needed.

\begin{lem}\label{lemma 4}
\[
 N(\lambda,r)\leq \left|\frac{zp'_{\lambda}(z)}{p_{\lambda}(z)}\right|\leq M(\lambda,r)
\]
for $r=|z|<1$ where $M(\lambda,r)$, is defined by $(\ref{M2(r)})$ and 
\begin{equation}{\label{M1(r)}}
N(\lambda,r)=\frac{2r\cos \lambda}{1+r^2+2r|\sin \lambda|}.
\end{equation}
The upper equality holds at $z_{0}=re^{i\theta}$ with $\theta$ in place of $\alpha$ satisfying $\eqref{maximum point}$ while the lower one holds at $z_{0}=re^{i\theta}$ with $\theta$ satisfying
\begin{equation*}
\begin{cases}
\theta-\lambda=\pi/2, \quad \lambda>0,\\
\theta-\lambda=-\pi/2, \quad \lambda\leq 0.
\end{cases}
\end{equation*} 

\end{lem}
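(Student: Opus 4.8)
The plan is to reduce the estimate to an elementary exercise in real analysis by computing $zp_\lambda'(z)/p_\lambda(z)$ explicitly and then maximizing (and minimizing) its modulus as $z$ runs over the circle $|z|=r$. From
\[
p_\lambda(z)=\frac{1+e^{-2i\lambda}z}{1-z},\qquad p_\lambda'(z)=\frac{1+e^{-2i\lambda}}{(1-z)^2},
\]
one gets
\[
\frac{zp_\lambda'(z)}{p_\lambda(z)}=\frac{(1+e^{-2i\lambda})z}{(1-z)(1+e^{-2i\lambda}z)}.
\]
Writing $z=re^{i\theta}$, taking moduli, and using $|1+e^{-2i\lambda}|=2\cos\lambda$ (valid since $|\lambda|<\pi/2$), the quantity to study becomes
\[
\left|\frac{zp_\lambda'(z)}{p_\lambda(z)}\right|
=\frac{2r\cos\lambda}{\sqrt{1+r^2-2r\cos\theta}\;\sqrt{1+r^2+2r\cos(\theta-2\lambda)}}.
\]
So everything reduces to finding the extrema over $\theta\in[0,2\pi)$ of the product
\[
g(\theta)=\bigl(1+r^2-2r\cos\theta\bigr)\bigl(1+r^2+2r\cos(\theta-2\lambda)\bigr),
\]
the maximum of $|zp_\lambda'/p_\lambda|$ corresponding to the minimum of $g$ and vice versa.

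Next I would carry out the one-variable optimization of $g(\theta)$. Expanding the product and using product-to-sum identities, $g(\theta)$ can be written as an affine function of $\cos\theta$, $\cos(\theta-2\lambda)$ and $\cos(2\theta-2\lambda)$; differentiating and setting $g'(\theta)=0$ yields a trigonometric equation whose solutions split into two regimes depending on whether a certain root lies in $[-1,1]$. Concretely, after simplification the stationary condition reduces (for the relevant branch) to $\sin(\theta-\lambda)=-\frac{1+r^2}{1-r^2}\sin\lambda$, which is solvable precisely when $\frac{1+r^2}{1-r^2}|\sin\lambda|\le 1$, i.e. when $2r\ge (1+r^2)\,2|\tan(\lambda/2)|/(1+\tan^2(\lambda/2))$ — after the half-angle bookkeeping this is exactly $r\ge |\tan(\lambda/2)|$. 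In that regime the minimizing $\theta$ satisfies \eqref{maximum point} and substituting back gives $g_{\min}=(1-r^2)^2$, hence the bound $2r/(1-r^2)$. When $r<|\tan(\lambda/2)|$ the interior critical equation has no solution in the admissible range and the minimum of $g$ is attained at the boundary value $\theta=\lambda\pm\pi/2$ (the sign chosen according to $\operatorname{sgn}\lambda$), where $\cos\theta=\sin\lambda$ or $-\sin\lambda$; a direct substitution then gives $g_{\min}=(1+r^2-2r|\sin\lambda|)^2$ and the bound $\frac{2r\cos\lambda}{1+r^2-2r|\sin\lambda|}$. The lower estimate $N(\lambda,r)$ comes from the same analysis applied to $g_{\max}$: the maximum of $g$ is always attained at $\theta-\lambda=\pm\pi/2$, giving $g_{\max}=(1+r^2+2r|\sin\lambda|)^2$, which yields $N(\lambda,r)$ and pins down the extremal argument as stated.

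The main obstacle will be the case analysis in the critical-point computation: one must verify carefully that for $r<|\tan(\lambda/2)|$ there really is no interior critical point producing a smaller value of $g$ than the boundary candidates, and, conversely, that for $r\ge|\tan(\lambda/2)|$ the solution of $\sin(\theta-\lambda)=-\frac{1+r^2}{1-r^2}\sin\lambda$ is a genuine minimum of $g$ (not a maximum or saddle) and beats the endpoint values. This amounts to a second-derivative or sign check together with continuity of both sides at the transition value $r=|\tan(\lambda/2)|$, where the two formulas for $M(\lambda,r)$ agree; matching them there is a convenient sanity check. Everything else — the algebraic simplifications, the evaluation of $g$ at the various critical $\theta$, and reading off where equality holds — is routine.
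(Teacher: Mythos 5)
Your strategy coincides with the paper's: compute $zp_\lambda'(z)/p_\lambda(z)=(1+e^{-2i\lambda})z/\bigl((1-z)(1+e^{-2i\lambda}z)\bigr)$, so that $\left|zp_\lambda'(z)/p_\lambda(z)\right|=2r\cos\lambda/\sqrt{g(\theta)}$ with $g(\theta)=(1+r^2-2r\cos\theta)(1+r^2+2r\cos(\theta-2\lambda))$, and then locate the extrema of $g$ on the circle $|z|=r$; the paper does exactly this after the rotation $\alpha=\theta-\lambda-\pi/2$ and a symmetry reduction to $\lambda\ge 0$. The difficulty is that the one step you make explicit, the interior stationary condition, is wrong, and it is precisely the step on which the case split in \eqref{M2(r)} rests. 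Differentiating, $g'(\theta)=4r\cos(\theta-\lambda)\bigl[(1+r^2)\sin\lambda+2r\sin(\theta-\lambda)\bigr]$, so besides $\theta=\lambda\pm\pi/2$ the critical points satisfy $\sin(\theta-\lambda)=-\tfrac{1+r^2}{2r}\sin\lambda$, not $\sin(\theta-\lambda)=-\tfrac{1+r^2}{1-r^2}\sin\lambda$. With the correct equation, solvability means $(1+r^2)|\sin\lambda|\le 2r$, which is exactly $r\ge|\tan(\lambda/2)|$, and substitution gives $g=(1-r^2)^2\cos^2\lambda$, hence the bound $2r/(1-r^2)$. With your equation, solvability would instead mean $|\sin\lambda|\le(1-r^2)/(1+r^2)$, i.e.\ $r\le\tan(\pi/4-|\lambda|/2)$, a different threshold; the step where you pass from $\tfrac{1+r^2}{1-r^2}|\sin\lambda|\le 1$ to $2r\ge(1+r^2)|\sin\lambda|$ silently replaces $1-r^2$ by $2r$ and is false, so carried out literally your case analysis would not reproduce \eqref{M2(r)}. (You presumably copied the $1-r^2$ from \eqref{maximum point}; note that the paper's own proof produces $\cos\alpha_3=-(1+r^2)\sin\lambda/(2r)$, so the denominator $1-r^2$ in \eqref{maximum point} appears to be a misprint that the proof itself does not use.)

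The items you defer as routine are indeed routine, and no second-derivative test is needed: assuming $\lambda\ge 0$, one simply compares the three critical values $(1+r^2+2r\sin\lambda)^2$, $(1+r^2-2r\sin\lambda)^2$ and, when admissible, $(1-r^2)^2\cos^2\lambda$. Since with $t=\tan(\lambda/2)$ one has $1+r^2-2r\sin\lambda-(1-r^2)\cos\lambda=\tfrac{2(t-r)^2}{1+t^2}\ge 0$, the interior critical value is automatically the minimum whenever the interior critical point exists ($r\ge|\tan(\lambda/2)|$), the endpoint value $(1+r^2-2r|\sin\lambda|)^2$ is the minimum otherwise, and the maximum is always $(1+r^2+2r|\sin\lambda|)^2$, which yields $N(\lambda,r)$ and the stated equality points. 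So your plan is essentially the paper's proof and is salvageable, but the stationary equation and the solvability equivalence must be corrected as above before the argument goes through.
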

\begin{proof} 
By observing that
\[
\left|\frac{\overline{z}p'_{\lambda}(\overline{z})}{p_{\lambda}(\overline{z})} \right|=\left| \frac{zp'_{-\lambda}(z)}{p_{-\lambda}(z)}\right|,
\]
we can restrict to the case $\lambda\geq 0$.
\par Since $p_{\lambda}'(z)/p_{\lambda}(z)=(1+e^{2i\lambda})/(1-z)(e^{2i\lambda}+z)$, after letting $z=re^{i(\alpha+\lambda+\pi/2)}$ and $h(\alpha)=|(1-z)(e^{2i\lambda}+z)|^2=|(1-re^{i(\alpha+\lambda+\pi/2)})(e^{2i\lambda}+re^{i(\alpha+\lambda+\pi/2)})|^2$, we obtain
\[
h(\alpha)=(1+r^2+2r\sin(\alpha+\lambda))(1+r^2-2r\sin(\lambda-\alpha))
\]
and
\begin{equation}\label{h(a)}
\frac{2r\cos\lambda}{\max_{-\pi<\alpha\leq \pi}\sqrt{h(\alpha)}}\leq \left|\frac{zp_{\lambda}'(z)}{p_{\lambda}(z)}\right|\leq \frac{2r\cos\lambda}{\min_{-\pi<\alpha\leq \pi}\sqrt{h(\alpha)}}.
\end{equation}
It is thus sufficient to search for the maximum and minimum of $h(\alpha)$ over $-\pi<\alpha\leq \pi$. A simple calculation yields 
\begin{equation*}
h'(\alpha)=-4r\sin\alpha[(1+r^2)\sin\lambda+2r\cos\alpha].
\end{equation*}

\noindent Since $h(\alpha)$ is smooth and periodic, the candidate minimum points of $h(\alpha)$ are the zero points of $h'(\alpha)$ which are $\alpha_{1}=0$, $\alpha_{2}=\pi$ and $\alpha_{3}=\pm \arccos((1+r^2)/2r \sin\lambda)$.
A calculation gives
\[
h(0)=(1+r^2+2r\sin\lambda)^2,
\]
\[
h(\pi)=(1+r^2-2r\sin\lambda)^2,
\]
and 
\[
h(\alpha_{3})=\cos^2 \lambda(1-r^2)^2.
\]
\noindent $\lambda\geq 0$ implies that $h(\pi)\leq h(0)$ and $h(\alpha_{3})<h(0)$ always hold for $0<r<1$. It is easy to check that $h(\pi)<h(\alpha_{3})$ if and only if $\sin\lambda>2r/(1+r^2)$. Therefore we can get the minimum of $h(\alpha)$ are
\begin{equation*}
\begin{cases}
(1+r^2-2r\sin\lambda)^2, \quad \sin \lambda>\frac{2r}{1+r^2},\\
\cos^2\lambda (1-r^2)^2,\quad \sin \lambda\leq \frac{2r}{1+r^2},\\
\end{cases}
\end{equation*}
which can be written in the form
\begin{equation}\label{min}
\begin{cases}
(1+r^2-2r\sin\lambda)^2, \quad r<\tan(\lambda/2),\\
\cos^2\lambda (1-r^2)^2,\quad r\geq \tan(\lambda/2),\\
\end{cases}
\end{equation}
and the maximum of $h(\alpha )$ is 
\begin{equation}\label{max}
(1+r^2+2r\sin\lambda)^2.
\end{equation}
Finally by using $(\ref{h(a)})$, $(\ref{min})$ and $(\ref{max})$, the inequalities can be deduced immediately. 
\end{proof}

\noindent\textit{Proof of Theorem $\ref{theorem 6}$.} The equivalence of (i) and (iv) in Theorem $\ref{theorem 1}$ implies that if $p\in P_{\lambda}$, then there exists a function $\omega(z)\in\mathcal{A}$ with $|\omega(z)|<1$ and $\omega(0)=0$ such that 
\[
p(z)=p_{\lambda}(\omega(z))
\]
in $\mathbb{D}$. 
Then by making use of Lemma $\ref{lemma 4}$ and Schwarz-Pick Lemma, we have 

\begin{equation}\label{inequalities}
\begin{split}
\left| \frac{zp'(z)}{p(z)}\right|
&=
\left|\frac{z\omega'(z)p_{\lambda}'(\omega(z))}{p_{\lambda}(\omega(z))}\right|=\left|\frac{z\omega'}{\omega}\right|\left|\frac{\omega'(z)p'(\omega)}{p(\omega)}\right|\\
&\leq 
\begin{cases}
\dfrac{1-|\omega|^2}{1-|z|^2}\dfrac{2|z|\cos\lambda}{1+|\omega|^2-2|\omega||\sin\lambda|}, \quad |\sin \lambda|>\frac{2|\omega|}{1+|\omega|^2},\\
\dfrac{1-|\omega|^2}{1-|z|^2}\dfrac{2|z|}{1-|\omega|^2},\quad |\sin \lambda|\leq \frac{2|\omega|}{1+|\omega|^2}, 
\end{cases}
\end{split}
\end{equation}

Since $|\omega(z)|\leq|z|=r$, function $2|\omega|/(1+|\omega|^2)$ is increasing in $|\omega|\in[0,r]$ and obtain its maximum value $2r/(1+r^2)$ if and only if  $\omega(z)=xz$ with $|x|=1$. On the other hand,
\[ 
\frac{1-|\omega|^2}{1-|z|^2}\frac{2r\cos\lambda}{1+|\omega|^2-2|\omega||\sin\lambda|}
\]
 is also increasing in $|\omega|$ provided
\[
|\sin \lambda|>\frac{2|\omega|}{1+|\omega|^2}. 
\] 
Therefore inequality \eqref{inequalities} implies

\begin{equation*}
\begin{split}
\left| \frac{zp'(z)}{p(z)}\right|
&\leq
\begin{cases}
\dfrac{2r\cos\lambda}{1+r^2-2r|\sin\lambda|}, \quad |\sin \lambda|>\frac{2r}{1+r^2},\\
\dfrac{2r}{1-r^2},\quad |\sin \lambda|\leq \frac{2r}{1+r^2}.\\
\end{cases}
\\
&=
\begin{cases}
\dfrac{2r\cos\lambda}{1+r^2-2r|\sin\lambda|}, \quad r<|\tan(\lambda/2)|,\\
\dfrac{2r}{1-r^2},\quad r\geq |\tan(\lambda/2)|.
\end{cases}
\end{split}
\end{equation*}

Hence the proof of Theorem $\ref{theorem 6}$ is completed.\qed \\

\par The sharp estimate of the entity in Theorem $\ref{theorem 6}$ first appears in a paper \cite{Ruscheweyh} by  Ruscheweyh and Singh. Their proof was based on variational method.

{\bf Theorem A.}\,
For $p\in\mathcal{P}_{0}$ and $\lambda\in(-\pi/2,\pi/2)$ the estimate
\begin{equation*}
\left|\frac{zp'(z)}{p(z)+i\tan\lambda}\right|\leq
\begin{cases}
\dfrac{(1-|z|^2)\cos\lambda}{1-2|z||\sin\lambda|+|z|^2},\, &|z|<|\tan\frac{\lambda}{2}|,\\
1, \, &|z|\geq |\tan\frac{\lambda}{2}|.
\end{cases}
\end{equation*}
is valid and sharp. Equality holds for certain functions in $\mathcal{P}_{0}$.\\

Note that Theorem $\ref{theorem 6}$ improves Theorem A since it shows that the only extremal functions are $p_{\lambda}(xz)$ with $|x|=1$.

\begin{pro}\label{proposition 1}
Let $p\in \mathcal{P}_{\lambda}$ with $\lambda\in(-\pi/2,\pi/2)$, then
\[
\left|\Im \frac{zp'(z)}{p(z)}\right|\leq M(\lambda,r)
\]
and
\[
\left|\Re \frac{zp'(z)}{p(z)}\right|\leq M(\lambda,r)
\]
where $r=|z|<1$ and $M(\lambda, r)$ is given in \eqref{M2(r)}. Equality occurs at point $z_{0}=re^{i\theta}$ in the first inequality if and only if $p(z)=p_{\lambda}(xz)$ and $r<|\tan(\lambda/2)|$, where $x=e^{i(\alpha-\theta)}$ with $\alpha$ satisfying \eqref{maximum point}.
\end{pro}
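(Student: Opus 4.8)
\noindent\textit{Plan for the proof of Proposition \ref{proposition 1}.}
The two inequalities are immediate from Theorem~\ref{theorem 6}, which gives $\lvert zp'(z)/p(z)\rvert\le M(\lambda,\lvert z\rvert)$: indeed $\lvert\Re w\rvert\le\lvert w\rvert$ and $\lvert\Im w\rvert\le\lvert w\rvert$ for every $w\in\mathbb{C}$. So the whole content is the equality statement for the first inequality, and that is where I would concentrate. Suppose $\lvert\Im w\rvert=M(\lambda,r)$ at some $z_0=re^{i\theta}$ with $0<r<1$, where $w:=z_0p'(z_0)/p(z_0)$. Then $M(\lambda,r)=\lvert\Im w\rvert\le\lvert w\rvert\le M(\lambda,r)$ by Theorem~\ref{theorem 6}, so \emph{both} $\lvert w\rvert=M(\lambda,r)$ and $\Re w=0$. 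The first equality is exactly the equality case of Theorem~\ref{theorem 6}, hence $p(z)=p_\lambda(xz)$ for some $x=e^{i(\alpha-\theta)}$ with $\alpha$ satisfying \eqref{maximum point}, and consequently $w=\zeta p_\lambda'(\zeta)/p_\lambda(\zeta)$ with $\zeta:=xz_0=re^{i\alpha}$. It remains to decide, among these $\zeta$, which make $w$ purely imaginary and what $\lvert w\rvert$ is then.

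For this I would compute $1/w$ (legitimate since $\zeta\ne0$, so $w\ne0$). Using $p_\lambda'/p_\lambda=(1+e^{2i\lambda})/\bigl((1-z)(e^{2i\lambda}+z)\bigr)$, $1+e^{2i\lambda}=2e^{i\lambda}\cos\lambda$, and $(1-\zeta)(e^{2i\lambda}+\zeta)/\zeta=e^{2i\lambda}/\zeta-\zeta+1-e^{2i\lambda}$, one obtains for $\zeta=re^{i\alpha}$, with $\gamma:=\alpha-\lambda$,
\[
\frac1w=\frac1{2\cos\lambda}\Bigl[\Bigl(\frac1r-r\Bigr)\cos\gamma-i\Bigl(\Bigl(\frac1r+r\Bigr)\sin\gamma+2\sin\lambda\Bigr)\Bigr].
\]
Since $1/r-r>0$, this shows $\Re w=0$ exactly when $\cos\gamma=0$, i.e. $\zeta=\pm ire^{i\lambda}$; and then $\lvert w\rvert$ equals $2r\cos\lambda/(1+r^2+2r\sin\lambda)$ or $2r\cos\lambda/(1+r^2-2r\sin\lambda)$, the larger being $2r\cos\lambda/(1+r^2-2r\lvert\sin\lambda\rvert)$. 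Completing the square gives $2r\cos\lambda/(1+r^2-2r\lvert\sin\lambda\rvert)\le 2r/(1-r^2)$, with equality precisely at $r=\lvert\tan(\lambda/2)\rvert$. Hence $\lvert w\rvert=M(\lambda,r)$ and $\Re w=0$ can hold simultaneously only when $r\le\lvert\tan(\lambda/2)\rvert$, and then $\zeta$ is the branch realizing the larger modulus, i.e. $\alpha-\lambda=-\pi/2$ if $\lambda>0$ and $\alpha-\lambda=\pi/2$ if $\lambda<0$ — precisely the value of $\alpha$ that \eqref{maximum point} prescribes in the range $r<\lvert\tan(\lambda/2)\rvert$. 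This gives the ``only if'' direction. Conversely, if $p(z)=p_\lambda(xz)$ with $x=e^{i(\alpha-\theta)}$, $\alpha$ as in \eqref{maximum point}, and $r<\lvert\tan(\lambda/2)\rvert$, then $\alpha-\lambda=\pm\pi/2$ and substituting $\zeta=\pm ire^{i\lambda}$ in the displayed identity makes $w$ purely imaginary with $\lvert w\rvert=2r\cos\lambda/(1+r^2-2r\lvert\sin\lambda\rvert)=M(\lambda,r)$, so $\lvert\Im w\rvert=M(\lambda,r)$.

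The inequalities and the closed form for $1/w$ are routine. The step that needs care is what comes right after ``$\cos\gamma=0$'': one must check that the purely-imaginary alternative is compatible with $\lvert w\rvert=M(\lambda,r)$ only for $r\le\lvert\tan(\lambda/2)\rvert$ — this is what forces the extra hypothesis $r<\lvert\tan(\lambda/2)\rvert$ in the statement — and then identify the surviving $\zeta$ with the correct line of \eqref{maximum point}. The argument is uniform in $\lambda$; one could alternatively reduce to $\lambda\ge0$ at the outset as in the proof of Lemma~\ref{lemma 4}, via $zp_{-\lambda}'(z)/p_{-\lambda}(z)=\overline{\bar zp_\lambda'(\bar z)/p_\lambda(\bar z)}$.
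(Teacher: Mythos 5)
Your argument is correct and is in fact more complete than the paper's own proof. The paper does exactly what your first paragraph does for the easy part — both inequalities are declared immediate from Theorem~\ref{theorem 6} — and then treats \emph{only} the sharpness: it evaluates $zp_{\lambda}'(z)/p_{\lambda}(z)$ at $z=ire^{i\lambda}$ for $\lambda<0$ and at $z=-ire^{i\lambda}$ for $\lambda>0$ with $r<|\tan(\lambda/2)|$, finding a purely imaginary value of modulus $M(\lambda,r)$; this coincides with your ``if'' direction (your computation is the reliable one here — the paper's displayed values $\mp iM(\lambda,r)$ have the signs of the imaginary part interchanged, which is immaterial for the statement). What you add, and the paper omits entirely, is the necessity argument: splitting equality into $|w|=M(\lambda,r)$ together with $\Re w=0$, invoking the equality case of Theorem~\ref{theorem 6} to force $p(z)=p_{\lambda}(xz)$, and then using the closed form of $1/w$ to see that $\Re w=0$ holds exactly when $\cos(\alpha-\lambda)=0$, whence $|w|\le 2r\cos\lambda/(1+r^2-2r|\sin\lambda|)\le 2r/(1-r^2)$ with the last inequality settled by the square $(1+\cos\lambda)\bigl(r-|\tan(\lambda/2)|\bigr)^2\ge 0$. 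All of these computations check out, and the identification of the surviving $\alpha$ with the first two lines of \eqref{maximum point} is right.

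One caveat, which your own chain of inequalities exposes: at $r=|\tan(\lambda/2)|$ the two expressions in \eqref{M2(r)} coincide, and the purely imaginary value attained at $\zeta=\mp ire^{i\lambda}$ still has modulus $M(\lambda,r)$, so equality in the first inequality \emph{does} occur there for $p=p_{\lambda}(xz)$. Thus what your argument actually establishes is $r\le|\tan(\lambda/2)|$, not the strict inequality asserted in the Proposition, and you should not claim to have recovered the stated ``only if'' without flagging this boundary case. This is really an imprecision in the statement itself (one the paper never confronts, since its proof verifies sharpness only), but as written your penultimate sentence of the second paragraph overstates what has been proved.
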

\begin{proof}
Since the above inequalities are straightforward consequence of Theorem \ref{theorem 6}, we only need to verify the sharpness. By a simple calculation, it is easy to see that if $\lambda<0$ for any fixed $r<-\tan(\lambda/2)$
\[
\frac{zp'_{\lambda}(z)}{p_{\lambda}(z)}=\frac{z(1+e^{2i\lambda})}{(1-z)(e^{2i\lambda}+z)}=\frac{-2ri\cos\lambda}{1+r^2-2r\sin\lambda}=-iM(\lambda, r)
\]
when $z=ire^{i\lambda}$. Similarly, we can get if $\lambda>0$ for any fixed $r<\tan(\lambda/2)$
\[
\frac{zp'_{\lambda}(z)}{p_{\lambda}(z)}=\frac{2ri\cos\lambda}{1+r^2-2r\sin\lambda}=iM(\lambda,r)
\]
when $z=-ire^{i\lambda}$.
Our proof is completed.
\end{proof}

\par We shall conclude this section with a result due to Kim and Sugawa \cite{KimSugawa} which gives a sufficient condition of $\mathcal{P}_{\lambda}$.
\begin{thm}\label{theorem 7}
Let $p\in \mathcal{A}_{0}$ satisfy that 
\[
\frac{zp'(z)}{p(z)}\prec\frac{zp'_{\lambda}(z)}{p_{\lambda}(z)}
\]
in $\mathbb{D}$, then $p\in\mathcal{P}_{\lambda}$.
\end{thm}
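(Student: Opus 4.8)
The plan is to exploit the subordination characterization (iv) from Theorem \ref{theorem 1}, namely that $p\in\mathcal{P}_\lambda$ is equivalent to $p\prec p_\lambda$, together with a standard differential-subordination argument. Write $q(z)=zp'(z)/p(z)$ and $Q(z)=zp_\lambda'(z)/p_\lambda(z)$, so the hypothesis is $q\prec Q$. Since $p\in\mathcal{A}_0$ with $p(0)=1$, we may locally write $\log p(z)$ as an analytic function vanishing at $0$, and then $q(z)=z(\log p(z))'$. The idea is to integrate: recover $\log p$ from $q$ and compare with $\log p_\lambda$ recovered from $Q$ in the same way. Concretely, for each fixed $z\in\mathbb{D}$ one has $\log p(z)=\int_0^1 q(tz)\,\frac{dt}{t}$ and $\log p_\lambda(z)=\int_0^1 Q(tz)\,\frac{dt}{t}$, so it suffices to transfer the pointwise subordination $q\prec Q$ through this averaging operator and conclude $\log p\prec \log p_\lambda$, whence $p\prec p_\lambda$, i.e. $p\in\mathcal{P}_\lambda$.

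First I would set up the Schwarz function: by $q\prec Q$ there is $\omega\in\mathcal{A}$ with $\omega(0)=0$, $|\omega(z)|<1$, and $q(z)=Q(\omega(z))$ for all $z$. I want to produce a single Schwarz function $\sigma$ with $p=p_\lambda\circ\sigma$. The natural candidate is obtained by noting that $p_\lambda$ is univalent with a well-understood inverse on $\mathbb{H}_\lambda$; one computes directly from $p_\lambda(z)=(1+e^{-2i\lambda}z)/(1-z)$ that $p_\lambda^{-1}(w)=(w-1)/(w+e^{-2i\lambda})$. The key identity to verify is that $Q=zp_\lambda'/p_\lambda$ satisfies a first-order ODE making $\sigma:=p_\lambda^{-1}\circ p$ a well-defined analytic self-map: differentiating $p_\lambda(\sigma)=p$ gives $z\sigma' p_\lambda'(\sigma)/p_\lambda(\sigma)\cdot(p_\lambda(\sigma)/(zp_\lambda'(\sigma))) \cdot \ldots$; more cleanly, $zp'(z)/p(z)=\big(z\sigma'(z)/\sigma(z)\big)\cdot\big(\sigma(z)p_\lambda'(\sigma(z))/p_\lambda(\sigma(z))\big)=\big(z\sigma'/\sigma\big)\,Q(\sigma)$. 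Combining with the hypothesis $zp'/p=Q(\omega)$, we get the relation $Q(\omega(z))=\frac{z\sigma'(z)}{\sigma(z)}Q(\sigma(z))$. The plan is then to show $\sigma(0)=0$ (clear, since $p(0)=1=p_\lambda(0)$) and $|\sigma(z)|<1$ on $\mathbb{D}$; the latter is exactly the assertion $p\prec p_\lambda$.

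To prove $|\sigma|<1$ I would argue by the boundary/maximum principle for subordination: $\sigma$ is analytic on $\mathbb{D}$ with $\sigma(0)=0$; if $\sigma$ were not a self-map of $\mathbb{D}$, then by continuity there is a smallest $r_0<1$ and a point $z_0$ with $|z_0|=r_0$, $|\sigma(z_0)|=1$, and $|\sigma|<1$ on $|z|<r_0$. At such a point the Jack–Clunie lemma gives $z_0\sigma'(z_0)/\sigma(z_0)=m\ge 1$. Plugging into the relation above, $Q(\omega(z_0))=m\,Q(\sigma(z_0))$ with $|\sigma(z_0)|=1$. The hard part — and the main obstacle — is to derive a contradiction from this: I must show that $Q$ maps $\partial\mathbb{D}$ (minus $z=1$, where $Q$ has a pole) into a set $E$ such that $m\cdot E$ for $m\ge 1$ is disjoint from $Q(\mathbb{D})$, which is where $\omega(z_0)$ lives. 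This requires a careful geometric description of $Q(z)=z(1+e^{2i\lambda})/\big((1-z)(e^{2i\lambda}+z)\big)$ and its boundary values; the estimates in Lemma \ref{lemma 4} (the exact range of $|Q|$ on circles, and the location of the extremal arguments) are precisely the tool for this, since they pin down $|Q|$ on $\partial\mathbb{D}$ versus $|Q|$ inside. Once the geometric incompatibility of $m\,Q(\partial\mathbb{D})$ with $Q(\mathbb{D})$ is established, the contradiction follows and $\sigma$ is a Schwarz function, giving $p=p_\lambda\circ\sigma\prec p_\lambda$, hence $p\in\mathcal{P}_\lambda$ by Theorem \ref{theorem 1}(iv). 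I would close by remarking that the convexity/starlikeness-type properties of the comparison function $p_\lambda$ (it is convex, being a half-plane map composed with a Möbius map) are what make the Jack-lemma step go through cleanly, so an alternative cleaner route is to invoke a known differential-subordination theorem: if $Q$ is such that the "admissibility" condition of Miller–Mocanu holds for the differential operator $p\mapsto zp'/p$, then $zp'/p\prec Q$ forces $p\prec p_\lambda$ directly; verifying that admissibility condition reduces to the same boundary estimate for $Q$ supplied by Lemma \ref{lemma 4}.
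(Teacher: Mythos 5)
The paper itself does not prove Theorem \ref{theorem 7} --- it is quoted from Kim and Sugawa --- so the only question is whether your argument stands on its own, and as written it does not: the gap is exactly the step you yourself flag as ``the hard part''. Your strategy (set $q=zp'/p\prec Q=zp_\lambda'/p_\lambda$, put $\sigma=p_\lambda^{-1}\circ p$, and rule out $|\sigma|$ reaching $1$ via Jack's lemma, which at a first contact point $z_0$ gives $Q(\omega(z_0))=m\,Q(\sigma(z_0))$ with $m\ge 1$, $|\sigma(z_0)|=1$) is the right and standard route. But the decisive claim --- that $m\,Q(\zeta)\notin Q(\mathbb{D})$ for $|\zeta|=1$ and $m\ge 1$ --- is never established, and the tool you propose for it cannot work: Lemma \ref{lemma 4} only bounds $|Q|$ on interior circles $|z|=r<1$, and since $\sup_{r<1}M(\lambda,r)=\infty$ the image $Q(\mathbb{D})$ contains points of every modulus, so no comparison of moduli ``on $\partial\mathbb{D}$ versus inside'' can separate $m\,Q(\zeta)$ from $Q(\mathbb{D})$. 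What is actually needed is the geometric fact recorded in the paper immediately after the theorem: $Q$ maps $\mathbb{D}$ univalently onto the slit domain $U_\lambda=\mathbb{C}\setminus\bigl(\{iy:\,y\ge A_\lambda\}\cup\{iy:\,y\le -1/A_\lambda\}\bigr)$ with $A_\lambda=\cos\lambda/(1+\sin\lambda)$. In particular the boundary values of $Q$ (away from the poles $1$ and $-e^{2i\lambda}$) lie on these two rays of the imaginary axis --- a one-line check: for $|z|=1$, $1/Q(z)=-i\bigl(\sin\lambda+\Im (e^{-i\lambda}z)\bigr)/\cos\lambda$ is purely imaginary --- and the rays point away from the origin, so they are mapped into themselves by multiplication by any $m\ge1$ and are disjoint from $U_\lambda=Q(\mathbb{D})$; equivalently, $U_\lambda$ is starlike with respect to $0$, which is the Miller--Mocanu admissibility you allude to. With this inserted, $m\,Q(\sigma(z_0))$ lies on a slit while $Q(\omega(z_0))\in U_\lambda$, the contradiction you want, and the proof closes.

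Two further points to tidy up. First, $\sigma=p_\lambda^{-1}\circ p$ is a priori only meromorphic on $\mathbb{D}$ (poles where $p=-e^{-2i\lambda}$), so in the first-contact argument you should note that $|\sigma|<1$ on $|z|<r_0$ excludes poles on $|z|\le r_0$, making Jack's lemma applicable there; also the hypothesis forces $p$ to be zero-free (otherwise $zp'/p$ is not analytic), which guarantees $\sigma(z_0)\ne -e^{2i\lambda}$ and $\sigma(z_0)\ne 1$, so $Q(\sigma(z_0))$ is finite and nonzero. Second, the opening suggestion to integrate the subordination (from $\log p(z)=\int_0^1 q(tz)\,dt/t$ to $\log p\prec\log p_\lambda$) should be dropped: subordination-preserving integral theorems require convexity-type hypotheses on the dominant function, and $Q(\mathbb{D})=U_\lambda$ is not convex, so that route does not go through --- fortunately your actual argument never uses it. Likewise, the convexity of $p_\lambda$ is not what makes the Jack-lemma step work; what matters is the starlikeness with respect to the origin of the image of $zp_\lambda'/p_\lambda$.
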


\par Note that the function $zp'_{\lambda}(z)/p_{\lambda}(z)$ maps $\mathbb{D}$ univalently onto $U_{\lambda}$, where $U_{\lambda}$ is the slit domain defined by
\[
U_{\lambda}=\mathbb{C}\setminus \{iy:\, y\geq A_{\lambda} \, \text{or} \,\, y\leq -1/A_{\lambda}\},\,\, A_{\lambda}=\frac{\cos\lambda}{1+\sin\lambda}.
\]

\section{Applications}

\subsection{$\lambda$-spirallike functions}
\begin{definition} (\cite{Duren}, see also \cite{AhujaSilverman})
A function $f\in \mathcal{A}_{1}$ is called \textit{$\lambda$-spirallike} (denoted by $f\in \mathcal{SP}(\lambda)$) for a real number $\lambda\in(-\pi/2,\pi/2)$ if 
\[\frac{zf'}{f}\in \mathcal{P}_{\lambda}.
\]
\end{definition}
\par Spirallike functions are shown to be univalent by {\v S}pa{\v c}ek \cite{Spacek}. Note that $\mathcal{SP}(0)$ is precisely the set of starlike functions normally denoted by $\mathcal{S}^{*}$.

\par By the definition of $\lambda$-spirallike function, we can easily deduce the following corollary from Theorem $\ref{theorem 5}$;
\begin{cor}\label{corollary 1}
Let $f(z)\in \mathcal{SP}(\lambda)$, then
\[
\left|\frac{zf'(z)}{f(z)}-\left(\frac{1}{1-r^2}+\frac{r^2}{1-r^2}e^{-2i\lambda}\right)\right|\leq\frac{2r\cos \lambda}{1-r^2}
\]
where $r=|z|<1$. In particular we have
\[
\frac{1+r^2\cos 2\lambda -2r\cos \lambda}{1-r^2}\leq \Re \frac{zf'(z)}{f(z)}\leq \frac{1+r^2\cos 2\lambda +2r\cos \lambda}{1-r^2}
\]
and
\[
1/A(\lambda,r)\leq\left|\frac{zf'(z)}{f(z)}\right|\leq A(\lambda,r)
\]
where $A(\lambda,r)$ is given by \eqref{A(r)}.
Those inequalities are sharp with extremal functions given by
\[
f_{\lambda}(z)=\frac{z}{(1-z)^{1+e^{-2i\lambda}}}.
\]

\end{cor}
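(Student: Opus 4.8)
The plan is to apply Theorem \ref{theorem 5} directly to the function $p(z)=zf'(z)/f(z)$. By the very definition of $\lambda$-spirallikeness, $f\in\mathcal{SP}(\lambda)$ means precisely that $p\in\mathcal{P}_{\lambda}$; moreover $p(0)=1$ since $f\in\mathcal{A}_{1}$, so $p\in\mathcal{A}_{0}$ and Theorem \ref{theorem 5} is applicable without any further hypothesis. Substituting $p=zf'/f$ into the disc inequality, the two real-part bounds and the two modulus bounds of Theorem \ref{theorem 5} immediately give the four displayed estimates of the corollary.

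The only remaining point is sharpness, i.e.\ exhibiting the extremal function. Theorem \ref{theorem 5} tells us equality is attained when $p(z)=p_{\lambda}(xz)$ with $|x|=1$; it suffices to treat $x=1$ and then note that $f_{\lambda}(xz)/x$ is again in $\mathcal{SP}(\lambda)$ and realizes the same bounds after a rotation $z\mapsto xz$. So I would look for $f$ with
\[
\frac{zf'(z)}{f(z)}=p_{\lambda}(z)=\frac{1+e^{-2i\lambda}z}{1-z}.
\]
Writing this as $\frac{f'(z)}{f(z)}=\frac{1}{z}+\frac{1+e^{-2i\lambda}}{1-z}$ and integrating (using $f(0)=0$, $f'(0)=1$ to fix the constants) yields $\log\bigl(f(z)/z\bigr)=-(1+e^{-2i\lambda})\log(1-z)$, hence
\[
f_{\lambda}(z)=\frac{z}{(1-z)^{1+e^{-2i\lambda}}},
\]
which is exactly the function in the statement; a quick check confirms $f_{\lambda}\in\mathcal{A}_{1}$ and $zf_{\lambda}'/f_{\lambda}=p_{\lambda}\in\mathcal{P}_{\lambda}$, so $f_{\lambda}\in\mathcal{SP}(\lambda)$.

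There is essentially no obstacle here: the corollary is a transcription of Theorem \ref{theorem 5} through the substitution $p=zf'/f$, and the only mild care needed is the antiderivative computation for the extremal function and the observation that the branch of $(1-z)^{1+e^{-2i\lambda}}$ is the one that is analytic in $\mathbb{D}$ and equals $1$ at $z=0$. If anything deserves a sentence, it is verifying that the claimed $f_{\lambda}$ does make each of the four inequalities an equality for suitable $z$ with $|z|=r$ — but that follows from the corresponding sharpness assertion already recorded in Theorem \ref{theorem 5} for $p_{\lambda}(xz)$, since $zf_{\lambda}'(z)/f_{\lambda}(z)=p_{\lambda}(z)$ identically.
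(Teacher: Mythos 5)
Your proposal is correct and follows exactly the paper's route: the corollary is obtained by substituting $p=zf'/f$ into Theorem \ref{theorem 5}, with the extremal function $f_{\lambda}(z)=z/(1-z)^{1+e^{-2i\lambda}}$ arising precisely from integrating $zf'/f=p_{\lambda}$. Your additional details (the integration and the branch/rotation remarks) are sound but not a departure from the paper's argument.
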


Note that the lower bound of the second estimate was proved by Robertson \cite{Robertson3}, but the others are not given in the literature as far as the author knows.

\subsection{$\lambda$-Robertson functions}

\begin{definition}
A function $f\in \mathcal{A}$ is said to be a \textit{$\lambda$-Robertson function} (denoted by $f\in\mathcal{R}(\lambda)$) if $f$ satisfies
\[
1+\frac{zf''(z)}{f'(z)}\in \mathcal{P}_{\lambda}
\]
for all $z \in \mathbb{D}$. 
\end{definition}
Note that $\mathcal{R}(0)$ is precisely the set of convex functions which is sometimes denoted by $\mathcal{K}$. $\lambda$-Robertson functions were first introduced by Robertson \cite{Robertson4} in 1969 and have been investigated by various authors, see for example \cite{HottaWang} and the references therein. Among other properties, the univalence is of interest and is shown by Pfaltzgraff \cite{Pfaltzgraff} that all the functions in $\mathcal{R}(\lambda)$ are univalent if and only if $\lambda=0$ or $|\lambda| \in [\pi/3, \pi/2)$.

In \cite{KimSrivastava}, Kim and Srivastava posed the \textit{open problem} whether
\[
1+\frac{zf''(z)}{f'(z)}\prec 1+\frac{zf_{\lambda}''(z)}{f_{\lambda}'(z)}=p_{\lambda}(z)
\]
implies 
\[
\frac{zf'(z)}{f(z)}\prec \frac{zf_{\lambda}'(z)}{f_{\lambda}(z)}
\]
for $z\in\mathbb{D}$ where $f_{\lambda}(z)=((1-z)^{1-2e^{-i\lambda}\cos\lambda}-1)/(2e^{-i\lambda}\cos\lambda -1)$.
In fact, it is an extension of the case $\lambda=0$ which was proved by MacGregor \cite{MacGregor} in 1975.
In a forthcoming paper of the present author, we give an example to show that it is not true for general $\lambda\not=0$ and also show that it holds in a restricted disc. As a corollary, we can obtain the radius of spirallikeness of $\lambda$-Robertson functions. We shall only state this result without proof. For more information, the reader may be refered to \cite{Wang2}.

\begin{thm}[See \cite{Wang2}]
Let $f\in \mathcal{R}(\lambda)$, then
\[
\Re e^{i\lambda}\frac{zf'(z)}{f(z)}>0
\]
in $|z|<R(\lambda)$ where $R(\lambda)$ is given by
\[
R(\lambda)=\sup \left\{r<1\,: \sup_{z\in\mathbb{D}} \left|\frac{1}{r}\frac{mrz-1+(1-rz)^{m}}{1-(1-rz)^m}\right|<1\right\}
\] 
where $m=1+e^{-2i\lambda}$. This result is sharp.

\end{thm}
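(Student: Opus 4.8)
The plan is to convert the hypothesis $f\in\mathcal R(\lambda)$ into an integral formula for $f$ and then to control $q(z):=zf'(z)/f(z)$. Reading $f\in\mathcal R(\lambda)$ through the equivalence (i)$\Leftrightarrow$(iii) of Theorem~\ref{theorem 1}, there is a Borel probability measure $\mu$ on $\partial\mathbb D$ with $1+zf''(z)/f'(z)=\int_{\partial\mathbb D}p_\lambda(xz)\,d\mu(x)$; since $p_\lambda(xz)-1=mxz/(1-xz)$ with $m=1+e^{-2i\lambda}$, integration of $f''(z)/f'(z)=\int_{\partial\mathbb D}\frac{mx}{1-xz}\,d\mu(x)$ gives
\[
f'(z)=\exp\Big(-m\int_{\partial\mathbb D}\log(1-xz)\,d\mu(x)\Big),
\]
and, substituting $s=\sigma w$ in $f(w)=\int_0^w f'(s)\,ds$, the quantity $\Xi(w):=f(w)/\big(wf'(w)\big)=1/q(w)$ takes the form
\[
\Xi(w)=\int_0^1\exp\Big(m\int_{\partial\mathbb D}\log\frac{1-xw}{1-x\sigma w}\,d\mu(x)\Big)\,d\sigma .
\]
Because $\Re e^{i\lambda}q(w)>0$ is the same as $\Re e^{-i\lambda}\Xi(w)>0$ and $\Xi$ never vanishes, the theorem becomes: for every such $\mu$ one has $\Xi(w)\in\mathbb H_{-\lambda}$ for all $|w|<r$ precisely when $r<R(\lambda)$.

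I would then reduce to a boundary extremal problem. By the minimum principle for the harmonic function $\Re e^{-i\lambda}\Xi$ it suffices to keep $\Xi$ off the line $\partial\mathbb H_{-\lambda}$ on $|w|=r$, and the rotation $f(z)\mapsto e^{-i\theta}f(e^{i\theta}z)$ — which preserves $\mathcal R(\lambda)$ and replaces $\mu$ by its push-forward under $x\mapsto e^{i\theta}x$ — lets me put the worst boundary point at $w=r\in(0,1)$. Composing with the conformal map $p_\lambda^{-1}(v)=(v-1)/(v+e^{-2i\lambda})$ of $\mathbb H_\lambda$ onto $\mathbb D$ gives $p_\lambda^{-1}(q(w))=(1-\Xi(w))/(1+(m-1)\Xi(w))$, so everything comes down to computing
\[
\sup_{\mu}\ \Big|\frac{1-\Xi(r)}{1+(m-1)\Xi(r)}\Big|
\]
over probability measures $\mu$ on $\partial\mathbb D$; then $R(\lambda)=\sup\{r<1:\text{this supremum}<1\}$, and the whole point is that this supremum equals $\sup_{z\in\mathbb D}|\Phi_r(z)|$.

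For the supremum over $\mu$ I would argue, in the spirit of the Robertson--Sakaguchi variational description recalled after Theorem~\ref{theorem 5}, that the extremal measure has at most two atoms. Although $\Xi$ depends on $\mu$ through an $\exp\!\int\!\log$, the argument of the integrand $\exp\!\big(m\int\log\frac{1-xw}{1-x\sigma w}\,d\mu\big)$ equals $2\cos\lambda\,\Im\!\big(e^{-i\lambda}\int\log\frac{1-xw}{1-x\sigma w}\,d\mu\big)$, which is affine in $\mu$ and hence squeezed between its values at point masses; tracking the resulting integrands as $x$ runs over $\partial\mathbb D$ and $\sigma$ over $[0,1]$ should force the extremal $\mu$ to concentrate, and for $\mu=t\delta_{x_0}+(1-t)\delta_{\overline{x_0}}$ one gets $f'(z)=(1-x_0z)^{-mt}(1-\overline{x_0}z)^{-m(1-t)}$ in closed form. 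Optimising in $t$ and $x_0$ and simplifying $\Xi$ should produce $p_\lambda^{-1}\big(q_{\mathrm{ext}}(rz)\big)=\Phi_r(z)$ with $\Phi_r$ as stated, and sharpness follows from the optimising function. As a check, for $\lambda=0$ one finds $\Phi_r(z)=z/(2-rz)$, whose modulus on $\mathbb D$ is $<1$ for every $r<1$, so $R(0)=1$ — the Marx--Strohh\"acker/MacGregor theorem that convex functions are starlike.

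The genuine obstacle, I expect, is exactly this identification and simplification of the extremal: reducing a genuinely non-linear functional of $\mu$ to at most two atoms, carrying out the optimisation over the atom position and weight, and verifying that the answer collapses precisely to $\Phi_r$ with $m=1+e^{-2i\lambda}$ — together with excluding the degenerate case in which $q(w)$ approaches the vertex $0$ of $\overline{\mathbb H_\lambda}$. The remaining ingredients — the integral representation, the rotation normalisation, the M\"obius bookkeeping, the passage from boundary circles to the stated supremum — are routine given Sections~2 and~3. An alternative route, closer to Theorem~\ref{theorem 7}, is to argue by contradiction with a boundary form of Jack's lemma applied to the Schwarz function $p_\lambda^{-1}\circ q$ at a first point $z_0$ with $q(z_0)\in\partial\mathbb H_\lambda$, inserting the resulting value of $z_0q'(z_0)$ into the identity $q+zq'/q=1+zf''/f'\in\overline{\mathbb H_\lambda}$; this variant terminates at the same optimisation.
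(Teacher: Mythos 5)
There is a genuine gap, and it sits exactly where you place it yourself: the reduction of the extremal problem over Herglotz measures $\mu$ to two-atom measures, and the subsequent identification of the optimum with the stated function. The Robertson--Sakaguchi variational description you invoke applies to functionals of the form $\Re F(p(z),zp'(z))$ evaluated at a \emph{single} point, i.e.\ to quantities depending only on the values of $p=1+zf''/f'$ and its derivative at one $z$. Your functional $\Xi(w)=\int_0^1\exp\bigl(m\int_{\partial\mathbb D}\log\frac{1-xw}{1-x\sigma w}\,d\mu(x)\bigr)d\sigma$ depends on $p$ along the whole radius (through the integration in $\sigma$), so that machinery does not apply. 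The ``affine in $\mu$'' observation controls only the argument of the integrand; its modulus $\exp\bigl(2\cos\lambda\,\Re(e^{-i\lambda}\int\log\cdots d\mu)\bigr)$ varies with $\mu$ as well, and squeezing the argument between point-mass values does not determine the extremal value of the outer integral, let alone of $\bigl|(1-\Xi)/(1+(m-1)\Xi)\bigr|$. Moreover the claimed collapse of the optimization to $\Phi_r$ is asserted, not verified, and sharpness is not established; note that after your own M\"obius bookkeeping the stated bound corresponds to the \emph{single} function $f_\lambda$ (one writes $\frac1r\frac{mrz-1+(1-rz)^m}{1-(1-rz)^m}=\frac{m-1}{r}\,\frac{1-\Xi_\lambda(rz)}{1+(m-1)\Xi_\lambda(rz)}$ with $\Xi_\lambda=f_\lambda/(wf_\lambda')$), which already suggests that a two-parameter optimization over two-atom measures is not the mechanism behind the formula.

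For comparison: the paper itself gives no proof of this theorem --- it is quoted from \cite{Wang2}, where it is obtained as a corollary of a subordination theorem (the Kim--Srivastava implication $1+zf''/f'\prec p_\lambda \Rightarrow zf'/f\prec zf_\lambda'/f_\lambda$ holds in a restricted disc, and $R(\lambda)$ is exactly the radius extracted from the associated Schwarz-function condition, which is where the expression $(mrz-1+(1-rz)^m)/(1-(1-rz)^m)$ comes from). Your closing alternative --- working with $p_\lambda^{-1}\circ q$ and a Jack's-lemma/differential-subordination argument on $q+zq'/q=1+zf''/f'$ --- is much closer to that route, but as written it is a one-sentence gesture that ``terminates at the same optimisation,'' so the essential step is still missing. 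As it stands the proposal is a programme with correct preparatory reductions (the Herglotz representation, the rotation normalization, the M\"obius identities, the $\lambda=0$ consistency check) but without the central argument that produces $R(\lambda)$ or its sharpness.
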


\begin{remark}
It is easy to verify that 
\[
R(0)=1
\]
which is the result of MacGregor \cite{MacGregor}.
\end{remark}

\subsection{Close-to-convex functions with argument $\lambda$}
\begin{definition}
A function $f\in \mathcal{A}_{1}$ is said to be \textit{close-to-convex} (denoted by $f\in \mathcal{CL}$) if there exist a starlike function $g$ and a real number $\lambda\in(-\pi/2,\pi/2)$ such that
\[
\frac{zf'}{g}\in \mathcal{P}_{\lambda}.
\]
\end{definition} 
\par If we specify the real number $\lambda$ in the above definition, the corresponding function is called a \textit{close-to-convex function with argument $\lambda$} and we denote the class of these functions by $\mathcal{CL}(\lambda)$ (see [5, II, Definition 11.4]). Note that the union of class $\mathcal{CL}(\lambda)$ over $\lambda\in(-\pi/2, \pi/2)$ is precisely $\mathcal{CL}$. The sharp coefficients bounds of the class $\mathcal{CL}(\lambda)$ are known (see \cite{Wang}).

\begin{lem}[See \cite{Duren}]\label{lemma 5}
Let $f\in\mathcal{S}^*$, then
\[
\frac{r}{(1+r)^2}\leq|f(z)|\leq\frac{r}{(1-r)^2}
\]
where $r=|z|$. Equality occurs if and only if $f$ is a suitable rotation of the Koebe function $k(z)=z/(1-z)^2$.

\end{lem}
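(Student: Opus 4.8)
\textbf{Proof proposal for Lemma \ref{lemma 5}.} The plan is the classical logarithmic–integration argument, fed by the Carath\'eodory estimate already recorded in Theorem \ref{theorem 5}. First I would set $p(z)=zf'(z)/f(z)$. Since $f\in\mathcal{S}^{*}=\mathcal{SP}(0)$, the very definition of $\lambda$-spirallike functions with $\lambda=0$ gives $p\in\mathcal{P}_{0}$, and of course $p(0)=1$. Specializing the real-part estimate of Theorem \ref{theorem 5} to $\lambda=0$ (where $\cos 2\lambda=\cos\lambda=1$, so the bounds collapse to $(1-r)^{2}/(1-r^{2})$ and $(1+r)^{2}/(1-r^{2})$) yields
\[
\frac{1-r}{1+r}\le \Re\, p(z)\le\frac{1+r}{1-r},\qquad r=|z|<1 .
\]

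Next I would fix $\theta$ and differentiate along the ray: since $\tfrac{\partial}{\partial r}\log f(re^{i\theta})=e^{i\theta}f'(re^{i\theta})/f(re^{i\theta})$, taking real parts gives
\[
\frac{\partial}{\partial r}\log\frac{|f(re^{i\theta})|}{r}
=\frac{1}{r}\bigl(\Re\, p(re^{i\theta})-1\bigr),
\]
so by the displayed bounds $-\tfrac{2}{1+r}\le \tfrac{\partial}{\partial r}\log(|f|/r)\le\tfrac{2}{1-r}$. Integrating from $0$ to $R$ and using $f(z)/z\to1$ as $z\to0$ (so that $\log(|f(re^{i\theta})|/r)\to0$), I get $-2\log(1+R)\le\log(|f(Re^{i\theta})|/R)\le-2\log(1-R)$, i.e. $R/(1+R)^{2}\le|f(Re^{i\theta})|\le R/(1-R)^{2}$, which is the assertion.

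For the equality statement: if $|f(z_{0})|=R/(1-R)^{2}$ with $R=|z_{0}|$, then the upper chain of inequalities is an equality, so the nonnegative continuous integrand $\tfrac{2}{1-r}-\tfrac{1}{r}(\Re p(re^{i\theta_{0}})-1)$ vanishes on $(0,R)$; hence $\Re p(re^{i\theta_{0}})=(1+r)/(1-r)$ there. Writing $q(z)=p(e^{i\theta_{0}}z)\in\mathcal{P}_{0}$, the function $q(z)-\tfrac{1+z}{1-z}$ is analytic near the segment $[0,R)$, has vanishing real part on it, and vanishes at $0$; by Schwarz reflection/the identity theorem it is identically $0$, so $zf'/f=(1+xz)/(1-xz)$ with $x=e^{-i\theta_{0}}$. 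Integrating (again normalizing by $f(z)/z\to1$) gives $f(z)=z/(1-xz)^{2}$, a rotation of the Koebe function; the lower bound is handled symmetrically. I expect the only delicate point to be this rigidity step — specifically, upgrading ``equality at one point'' to ``the extremal real-part bound holds on a whole radial segment'' and then to a global identification of $p$ — but the continuity of the integrand plus Schwarz reflection makes it routine; alternatively one may simply invoke the uniqueness part of the extremal statement in Theorem \ref{theorem 5}. \epf
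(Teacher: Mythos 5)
The paper offers no proof of Lemma \ref{lemma 5} to compare with --- it is quoted from Duren --- so your argument stands on its own. The growth bounds themselves you prove correctly and by the standard route: $p=zf'/f\in\mathcal{P}_{0}$, the $\lambda=0$ case of Theorem \ref{theorem 5} gives $(1-r)/(1+r)\le\Re p(z)\le(1+r)/(1-r)$, and radial integration of $\frac{\partial}{\partial r}\log\bigl(|f(re^{i\theta})|/r\bigr)=\frac{1}{r}\bigl(\Re p(re^{i\theta})-1\bigr)$ with the normalization $f(z)/z\to 1$ yields $r/(1+r)^{2}\le|f(z)|\le r/(1-r)^{2}$.

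The equality case is where your justification genuinely fails. You correctly reduce to: $q(z)=p(e^{i\theta_{0}}z)\in\mathcal{P}_{0}$ satisfies $\Re q(r)=(1+r)/(1-r)$ for $0<r<R$. But the step ``$q(z)-(1+z)/(1-z)$ is analytic, has vanishing real part on the segment and vanishes at $0$, hence is identically $0$ by Schwarz reflection/the identity theorem'' is not a valid deduction: $h(z)=iz$ is analytic, $\Re h=0$ on the whole real segment, $h(0)=0$, yet $h\not\equiv 0$. Schwarz reflection extends functions across boundary arcs and the identity theorem needs the function itself (not merely its real part) to vanish on a set with an accumulation point. Your fallback of ``the uniqueness part of Theorem \ref{theorem 5}'' is also unavailable, since that theorem asserts only sharpness, not that equality at a point forces $p=p_{\lambda}(xz)$. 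The standard repair uses the extra structure you have not exploited, namely $q\prec p_{0}$ (Theorem \ref{theorem 1}, (iv) with $\lambda=0$): writing $q=p_{0}\circ\omega$ with $\omega$ a Schwarz function, $q(r)$ lies in the closed disc $\{(1+\zeta)/(1-\zeta):|\zeta|\le r\}$, whose real part attains $(1+r)/(1-r)$ only at the single point $(1+r)/(1-r)$; hence $q(r)=(1+r)/(1-r)$, so $\omega(r)=r$ and $|\omega(r)|=r$ at an interior point $r\ne 0$, whence $\omega(z)\equiv z$ by the equality case of the Schwarz lemma (a single point of equality suffices --- the whole segment is not needed). Thus $q=p_{0}$, i.e.\ $zf'/f=(1+e^{-i\theta_{0}}z)/(1-e^{-i\theta_{0}}z)$, and integration gives $f(z)=z/(1-e^{-i\theta_{0}}z)^{2}$; the lower bound is symmetric. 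Alternatively, the Herglotz representation of Theorem \ref{theorem 1} (iii) gives the same rigidity, since equality forces the measure $\mu$ to be a point mass. With that replacement your proof is complete.
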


\par By applying Theorem $\ref{theorem 5}$ and Lemma $\ref{lemma 5}$, we can get the sharp distortion theorem for $\mathcal{CL}(\lambda)$.

\begin{thm}\label{theorem 8}
Let $f(z)\in \mathcal{CL}(\lambda)$ for a real constant $\lambda\in(-\pi/2,\pi/2)$, then
\[
\frac{1}{A(\lambda,r)(1+r)^2}\leq |f'(z)|\leq \frac{A(\lambda,r)}{(1-r)^2}.
\]
where $A(\lambda,r)$ is given in $(\ref{A(r)})$ and $r=|z|<1$. The inequalities are sharp with extremal functions $f(z)$ satisfying
\[
f'(z)=\frac{1+e^{-2i\lambda}xz}{(1-yz)^2(1-xz)}
\]
for $|x|=|y|=1$.
\end{thm}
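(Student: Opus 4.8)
The plan is to exploit the defining relation of $\mathcal{CL}(\lambda)$ together with the distortion bounds already established. Since $f\in\mathcal{CL}(\lambda)$, by definition there is a starlike function $g\in\mathcal{S}^{*}$ with $p(z):=zf'(z)/g(z)\in\mathcal{P}_{\lambda}$. Hence we may write
\[
f'(z)=\frac{g(z)}{z}\,p(z),
\]
and the task reduces to multiplying a sharp bound for $|g(z)/z|$ by a sharp bound for $|p(z)|$.

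First I would bound $|g(z)/z|$. Lemma~\ref{lemma 5} gives $r/(1+r)^{2}\leq |g(z)|\leq r/(1-r)^{2}$ for $r=|z|$, so dividing by $r$ yields
\[
\frac{1}{(1+r)^{2}}\leq\left|\frac{g(z)}{z}\right|\leq\frac{1}{(1-r)^{2}},
\]
with equality exactly when $g$ is a rotation of the Koebe function $k(z)=z/(1-z)^{2}$, i.e. $g(z)=z/(1-yz)^{2}$ for some $|y|=1$ (with the appropriate rotation giving the upper or lower extreme along a given ray). Next I would invoke Theorem~\ref{theorem 5}, which gives $1/A(\lambda,r)\leq |p(z)|\leq A(\lambda,r)$ for $p\in\mathcal{P}_{\lambda}$, with equality forced by $p(z)=p_{\lambda}(xz)=(1+e^{-2i\lambda}xz)/(1-xz)$, $|x|=1$. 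Combining the two estimates multiplicatively gives
\[
\frac{1}{A(\lambda,r)(1+r)^{2}}\leq |f'(z)|=\left|\frac{g(z)}{z}\right||p(z)|\leq\frac{A(\lambda,r)}{(1-r)^{2}},
\]
which is the claimed inequality.

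For sharpness I would take $g(z)=z/(1-yz)^{2}$ and $p(z)=p_{\lambda}(xz)$, so that
\[
f'(z)=\frac{g(z)}{z}\,p(z)=\frac{1+e^{-2i\lambda}xz}{(1-yz)^{2}(1-xz)},
\]
the stated extremal derivative; one checks that $f'(0)=1$ and that $f'$ has the right form, and that integrating produces an admissible $f\in\mathcal{A}_{1}$. The only genuinely delicate point is the equality discussion: because the extremal $g$ in Lemma~\ref{lemma 5} and the extremal $p$ in Theorem~\ref{theorem 5} each attain their bound only at a single boundary direction determined by the rotation parameter, one must check that the phases $x$ and $y$ can be chosen so that both factors simultaneously reach their maximum (respectively minimum) modulus at the same point $z$ on a given ray — this is where the main care is required, but it works because $x$ and $y$ are free independent unimodular parameters, so the direction of the extremal point for each factor can be aligned. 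The inequalities themselves are then immediate from the triangle-inequality-type product bound, so I expect no real obstacle beyond bookkeeping the extremal configuration.
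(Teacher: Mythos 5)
Your proposal is correct and follows essentially the same route the paper indicates: writing $f'=(g/z)\,p$ with $g\in\mathcal{S}^*$ and $p\in\mathcal{P}_{\lambda}$, then multiplying the growth bound of Lemma~\ref{lemma 5} by the modulus bound of Theorem~\ref{theorem 5}, with sharpness from $g(z)=z/(1-yz)^2$ and $p(z)=p_{\lambda}(xz)$. Your observation that the independent unimodular parameters $x$ and $y$ can be aligned so both factors attain their extremes at the same point is exactly the (implicit) sharpness argument of the paper.
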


\begin{remark}
Theorem $\ref{theorem 8}$ improves the distortion theorem of close-to-convex functions (see \cite{Duren}) since the real-valued function $A(\lambda,r)$ is symmetric in $\lambda$ with respect to the origin and 
\[
\frac{1-r}{1+r}\leq A(\lambda,r)\leq \frac{1+r}{1-r}
\]
for any $\lambda\in(-\pi/2,\pi/2)$.
\end{remark}

Note that it is easy to deduce the growth theorem of close-to-convex functions with argument $\lambda$ from Theorem \ref{theorem 8}, we omit it here since the form is not very pleasible.

\subsection{Analytic functions whose derivative is in $\mathcal{P}_{\lambda}$}

\paragraph{} \quad  Let $\mathcal{D}(\lambda)=\{f\in\mathcal{A}_{1}:\, f'\in\mathcal{P}_{\lambda}\}$ for $-\pi/2<\lambda<\pi/2$. It is easy to see that $\mathcal{D}(\lambda)\subset\mathcal{CL}(\lambda)$, thus $\mathcal{D}(\lambda)\subset\mathcal{S}$. Some properties of $\mathcal{D}(\lambda)$ can be deduced by those of $\mathcal{D}(0)$ which have been studied in \cite{Gray-Ruscheweyh}, \cite{MacGregor-1962} and so on. We shall only present a distortion theorem which is a direct consequence of Theorem \ref{theorem 5}.

\begin{thm}
Let $f\in\mathcal{D}(\lambda)$, then
\[
1/A(\lambda,r)\leq |f'(z)|\leq A(\lambda,r),
\]
where $r=|z|<1$ and $A(\lambda,r)$ is given by \eqref{A(r)}. These inequalities are sharp with extremal function
\begin{equation}\label{extremal f}
f(z)=-(1+e^{-2i\lambda})\log(1-z)-e^{-2i\lambda}z.
\end{equation}
\end{thm}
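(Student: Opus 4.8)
The plan is to derive everything directly from Theorem~\ref{theorem 5}, since $f'\in\mathcal{P}_{\lambda}$ means $p:=f'$ is an element of the tilted Carathéodory class and the stated bounds on $|f'(z)|$ are literally the modulus estimates $1/A(\lambda,r)\le|p(z)|\le A(\lambda,r)$ from that theorem. So the only real content is (a) observing that $\mathcal{D}(\lambda)$ is nonempty and well-defined, (b) checking that the bounds are attained, and (c) verifying that the proposed extremal function in \eqref{extremal f} is indeed in $\mathcal{D}(\lambda)$ and achieves equality.

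For step (c) I would first compute the derivative of the candidate function. Differentiating $f(z)=-(1+e^{-2i\lambda})\log(1-z)-e^{-2i\lambda}z$ gives
\[
f'(z)=\frac{1+e^{-2i\lambda}}{1-z}-e^{-2i\lambda}=\frac{1+e^{-2i\lambda}-e^{-2i\lambda}(1-z)}{1-z}=\frac{1+e^{-2i\lambda}z}{1-z}=p_{\lambda}(z),
\]
so $f'=p_{\lambda}\in\mathcal{P}_{\lambda}$ by definition, and moreover $f\in\mathcal{A}_1$ since $f(0)=0$ and $f'(0)=1$. Thus $f\in\mathcal{D}(\lambda)$. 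The sharpness of both inequalities then follows because Theorem~\ref{theorem 5} already asserts its modulus estimates are sharp precisely with extremal functions $p_{\lambda}(xz)$, $|x|=1$; taking $x=1$ (or the appropriate rotation depending on which $z$ one evaluates at) shows equality is attained by $p=p_{\lambda}$, i.e.\ by our $f$. I would spell out which point $z$ (on the relevant radius) gives equality in the upper versus lower bound, mirroring the computation in the proof of Theorem~\ref{theorem 5}.

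There is essentially no obstacle here — the statement is a corollary of Theorem~\ref{theorem 5} applied to $p=f'$, and the proof amounts to the one-line derivative computation above plus a sentence invoking the sharpness clause of Theorem~\ref{theorem 5}. If anything needs care, it is just making sure the extremal function is the antiderivative of $p_{\lambda}$ normalized so that $f\in\mathcal{A}_1$ rather than merely $f'\in\mathcal{A}_0$; the constant term $-e^{-2i\lambda}z$ in \eqref{extremal f} and the absence of an additive constant are exactly what enforces $f(0)=0$ and $f'(0)=1$, which I would note explicitly.

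So the writeup would read roughly: ``\emph{Proof.} Since $f\in\mathcal{D}(\lambda)$ means $f'\in\mathcal{P}_{\lambda}$, the asserted inequalities are immediate from the modulus estimate in Theorem~\ref{theorem 5} applied to $p=f'$. For sharpness, set $f(z)=-(1+e^{-2i\lambda})\log(1-z)-e^{-2i\lambda}z$; then $f(0)=0$, and
\[
f'(z)=\frac{1+e^{-2i\lambda}}{1-z}-e^{-2i\lambda}=\frac{1+e^{-2i\lambda}z}{1-z}=p_{\lambda}(z),
\]
so $f'(0)=1$ and $f\in\mathcal{D}(\lambda)$, and by the sharpness part of Theorem~\ref{theorem 5} the bounds $1/A(\lambda,r)$ and $A(\lambda,r)$ are attained by this $f$ on each circle $|z|=r$. $\Box$''
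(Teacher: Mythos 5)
Your proposal is correct and matches the paper's approach: the paper presents this result as a direct consequence of Theorem \ref{theorem 5} applied to $p=f'$, which is exactly what you do, and your computation showing $f'(z)=p_{\lambda}(z)$ for the function in \eqref{extremal f} (with $f(0)=0$, $f'(0)=1$) correctly justifies the sharpness claim. (Only a cosmetic remark: the term $-e^{-2i\lambda}z$ is a linear, not constant, correction, but your explanation of its role in normalizing $f'(0)=1$ is right.)
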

For a locally univalent function $f$, the hyperbolic norm of the Pre-Schwarzian derivative $T_{f}=f''/f'$ is defined by on $\mathbb{D}$
\[
||f||=\sup_{|z|<1}(1-|z|^2)|T_{f}(z)|.
\]
Since each function in $\mathcal{P}_{\lambda}$ is a Gelfer function, by Gelfer's theorem (see [14, Theorem 2.4]), we have for each function $f\in\mathcal{D}(\lambda)$,
\[
||f||\leq 2.
\]
Our next result shows that this estimate is sharp for the class $\mathcal{D}(\lambda)$, and the extremal functions are also given.
\begin{thm}\label{theorem 11}
Let $f\in\mathcal{D}(\lambda)$, then 
\[
||f||\leq 2.
\]
This bound is sharp for each $\lambda\in(-\pi/2,\pi/2)$ with extremal function $f$ given in \eqref{extremal f}
\end{thm}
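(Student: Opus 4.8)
The plan is to bound $(1-|z|^2)|T_f(z)|$ directly. Since $f\in\mathcal{D}(\lambda)$ means $p:=f'\in\mathcal{P}_\lambda$, we have $T_f = f''/f' = p'/p$, so
\[
(1-|z|^2)|T_f(z)| = (1-|z|^2)\left|\frac{p'(z)}{p(z)}\right| = \frac{1-|z|^2}{|z|}\cdot\left|\frac{zp'(z)}{p(z)}\right|.
\]
Now I would invoke Theorem \ref{theorem 6}, which gives $|zp'(z)/p(z)|\le M(\lambda,r)$ with $r=|z|$, and use $M(\lambda,r)\le 2r/(1-r^2)$ (stated in the Remark following Theorem \ref{theorem 6}, where it is noted that $M(\lambda,r)$ is decreasing in $|\lambda|$ with $M(0,r)=2r/(1-r^2)$). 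Combining, $(1-|z|^2)|T_f(z)|\le \frac{1-r^2}{r}\cdot\frac{2r}{1-r^2}=2$, and taking the supremum over $|z|<1$ yields $\|f\|\le2$. This reproves the bound already obtained from Gelfer's theorem, but via the machinery of this paper.

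For sharpness, I would take $f$ as in \eqref{extremal f}, i.e. $f(z)=-(1+e^{-2i\lambda})\log(1-z)-e^{-2i\lambda}z$. A direct differentiation gives $f'(z)=\frac{1+e^{-2i\lambda}}{1-z}-e^{-2i\lambda}=\frac{1+e^{-2i\lambda}z}{1-z}=p_\lambda(z)$, so indeed $f\in\mathcal{D}(\lambda)$, and $T_f=p_\lambda'/p_\lambda$. The task is then to show $\sup_{|z|<1}(1-|z|^2)|p_\lambda'(z)/p_\lambda(z)|=2$. I would use Lemma \ref{lemma 4}, which gives $|zp_\lambda'(z)/p_\lambda(z)|\le M(\lambda,r)$ with equality at a specific boundary angle; on the regime $r\ge|\tan(\lambda/2)|$ one has $M(\lambda,r)=2r/(1-r^2)$, so that $(1-r^2)|p_\lambda'/p_\lambda|$ attains the value $2$ along the corresponding rays. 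Letting $r\to1$ (where the condition $r\ge|\tan(\lambda/2)|$ is automatic since $|\tan(\lambda/2)|<1$) along such a ray forces the supremum up to $2$, establishing $\|f\|=2$.

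The main obstacle — really the only point requiring care — is the sharpness computation: one must verify that the value $2$ is genuinely approached, i.e. pin down for each $r$ close to $1$ the angle $\theta$ (satisfying the third case of \eqref{maximum point}, $\sin(\theta-\lambda)=-\frac{1+r^2}{1-r^2}\sin\lambda$) at which $|z p_\lambda'(z)/p_\lambda(z)|=M(\lambda,r)=2r/(1-r^2)$ is attained, and confirm that this equation is solvable for $r$ near $1$ (it is, since the right side $\to-\sin\lambda$, which lies in $(-1,1)$). Then $(1-|z|^2)|T_f(z)|=\frac{1-r^2}{r}\cdot\frac{2r}{1-r^2}=2$ at those points, so the supremum equals $2$ and is in fact attained (not merely approached) whenever $|\tan(\lambda/2)|\le r<1$. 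Everything else is the short chain of identities above together with citations of Theorem \ref{theorem 6} and Lemma \ref{lemma 4}.
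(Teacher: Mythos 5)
Your proposal is correct and follows essentially the same route as the paper: the bound via Theorem \ref{theorem 6} combined with Remark 3 (so $(1-r^2)|f''/f'|\le\frac{1-r^2}{r}\cdot\frac{2r}{1-r^2}=2$), and sharpness from $M(\lambda,r)=2r/(1-r^2)$ for $r\ge|\tan(\lambda/2)|$ applied to the function \eqref{extremal f}, whose derivative is $p_\lambda$. Your sharpness discussion is actually more explicit than the paper's one-line remark; the only small slip is that your limit check ("right side $\to-\sin\lambda$") corresponds to the condition $\sin(\theta-\lambda)=-\frac{1+r^2}{2r}\sin\lambda$ coming from Lemma \ref{lemma 4} (whose solvability is exactly the regime $r\ge|\tan(\lambda/2)|$), not to the factor $\frac{1+r^2}{1-r^2}$ as printed in \eqref{maximum point}, but this does not affect the argument.
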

\begin{proof}
For $f\in\mathcal{D}(\lambda)$, we have $f'\in\mathcal{P}_{\lambda}$, thus in view of Theorem \ref{theorem 6}, 
\[
\left|\frac{zf''(z)}{f'(z)}\right|\leq M(\lambda, |z|)
\]
where $M(\lambda, r)$ is given in \eqref{M2(r)}. Remark 3 gives that $M(\lambda,r)\leq M(0,\lambda)=2r/(1-r^2)$, therefore we have $||f||\leq 2$. The sharpness can be obtained by observing that $M(\lambda,r)=M(0,\lambda)$ if $r>|\tan(\lambda/2)|$.

\end{proof}

Note that the hyperbolic norm of $f$, $f\in\mathcal{D}(0)$ was obtained by Nunokawa \cite{Nunokawa} as well. It is known that  (cf. \cite{Kim-Sugawa}) $f$ is bounded if $||f||<2$ and the bound depends only on the value of $||f||$. Therefore Theorem \ref{theorem 11} implies that every function not of the form $f(xz)/z$ where $f(z)$ is given in \eqref{extremal f} and $|x|=1$ in $\mathcal{D}(\lambda)$ is bounded.

\[
\text{Acknowledgements}
\]
The author is grateful to Professor Toshiyuki Sugawa for his constant encouragement and useful discussions during the preparation of this paper. Without his help, this paper will never be finished.


\end{document}